\newcommand{\lspan} {\operatorname{span}}
\newcommand{\condref}[1] {\hyperref[cond.#1]{(#1)}}
\newcommand{\Aut}{\operatorname{Aut}}
\newcommand{\spn}{\operatorname{span}}
\newcommand{\ca}{\mathrm{C}^*}
\newcommand{\A}{{\mathcal{A}}}
  \newcommand{\B}{{\mathcal{B}}}
  \newcommand{\E}{{\mathcal{E}}}
  \newcommand{\N}{{\mathcal{N}}}
\renewcommand{\O}{{\mathcal{O}}}
  \newcommand{\T}{{\mathcal{T}}}
\newcommand{\cA}{\mathcal{A}}
\newcommand{\cB}{\mathcal{B}}
\newcommand{\cE}{\mathcal{E}}
\newcommand{\cK}{\mathcal{K}}
\newcommand{\cL}{\mathcal{L}}
\newcommand{\cN}{\mathcal{N}}
\newcommand{\cO}{\mathcal{O}}
\newcommand{\cT}{\mathcal{T}}
\newcommand{\fA}{\mathfrak{A}}
\newcommand{\bC}{{\mathbb{C}}}
\newcommand{\bN}{{\mathbb{N}}}
\newcommand{\foral}{\text{ for all }}
\newcommand{\qforal}{\quad\text{for all}\quad}
\newcommand{\ZS}{Zappa-Sz\'{e}p }
\newtheorem{theorem}{Theorem}[section]
\newtheorem{corollary}[theorem]{Corollary}
\newtheorem{proposition}[theorem]{Proposition}    %%%%%%%%%%%%%%%%%%%%%%%%%5
\theoremstyle{definition}
\newtheorem{rem}[theorem]{Remark}
\newtheorem{defn}[theorem]{Definition}
\newtheorem{eg}[theorem]{Example}
\theoremstyle{definition}
\newtheorem{definition}[theorem]{Definition}
\newtheorem{remark}[theorem]{Remark}
\numberwithin{equation}{section}
\begin{document}

\title{Zappa-Sz\'{e}p actions of groups on product systems}
%{Zappa-Sz\'{e}p products of product systems by groups}

\author{Boyu Li}
\address{Department of Mathematics and Statistics, University of Victoria, Victoria, B.C. V8W 3R4}
\email{boyuli@uvic.ca}

\author{Dilian Yang}
\address{Department of Mathematics and Statistics, University of Windsor, Windsor, ON.  N9B 3P4}
\email{dyang@uwindsor.ca}
\date{\today}

\thanks{The first author was supported by a fellowship of Pacific Institute for the Mathematical Sciences. The second author was partially supported by an NSERC Discovery Grant 808235.}

\subjclass[2010]{46L55, 46L05, 20M99.}
\keywords{Zappa-Sz\'{e}p action, product system, right LCM semigroup, Nica-Toeplitz representation}

\begin{abstract}  
%%%%%
Let $G$ be a group and $X$ be a product system over a semigroup $P$. Suppose $G$ has a left action on $P$ and $P$ has a right action on $G$, so 
that one can form a Zappa-Sz\'ep product $P\bowtie G$. We define a Zappa-Sz\'ep action of $G$ on $X$ to be a collection of functions 
on $X$ that are compatible with both actions from $P\bowtie G$ in a certain sense. Given a Zappa-Sz\'ep action of $G$ on $X$, we construct 
a new product system $X\bowtie G$ over $P\bowtie G$, called the Zappa-Sz\'ep product of $X$ by $G$. We then associate to $X\bowtie G$ 
several universal C*-algebras and prove their respective Hao-Ng type isomorphisms. 
A special case of interest is when a Zappa-Sz\'{e}p action is homogeneous. This case naturally 
generalizes group actions on product systems in the literature. For this case, besides the Zappa-Sz\'ep product system $X\bowtie G$,
one can also construct a new type of Zappa-Sz\'{e}p product $X \widetilde\bowtie G$ over $P$. Some essential differences arise between these two 
types of Zappa-Sz\'ep product systems and their associated C*-algebras. 
\end{abstract}

\maketitle

\section{Introduction}

In group theory, a \ZS product of two groups $G$ and $H$ generalizes a semi-direct product by encoding a two-way action between $G$ and $H$. In addition to a left action of $G$ on $H$, the \ZS product encodes an 
additional right action of $H$ on $G$. An analogue of semi-direct products in operator algebra is the crossed product construction 
arising from various dynamical systems. In its simplest form, a group $G$ act on a C*-algebra $\cA$ by $*$-automorphisms, in a similar fashion as in a semi-direct product. We seek to extend the \ZS type construction into the field of operator algebras, which would naturally generalize the crossed product construction. 

To construct a \ZS type structure in operator algebras, there are two key ingredients. First, a \ZS product of an operator algebra $\cA$ and a group $G$ requires a left action of $G$ on $\cA$ and a right action of $\cA$ on $G$. 
The right $\cA$ action on $G$ requires a grading on $\cA$. In the operator algebra literature, there are two natural ways of putting a grading: either via Fell bundles graded by groupoids, 
or via product systems graded by semigroups. The \ZS product of a Fell bundle by a groupoid is recently studied in \cite{DL2020}. This paper aims to study \ZS products of product systems by groups. The second key ingredient is an appropriate replacement of the group action in a dynamical system. A \ZS product of a product system by a group $G$ needs to encode a two-way action between the product system and the group in the scenario. This requires the group action on the product system to be compatible with the product system action on the group. Finding a right notion of a \ZS product with compatible actions in the context of product systems is a key in our construction. 

Let $P$ be a semigroup and $G$ a group. Suppose $G$ has a left action on $P$ and $P$ has a right action on $G$, so that one can form a \ZS product $P\bowtie G$. Let $X$ be a product system over $P$. 
We introduce a notion of the \ZS action of $G$ on $X$ in Definition \ref{D:beta}. Given a \ZS action of $G$ on $X$, we define a \ZS product of $X$ by $G$. 
We show in Theorem \ref{T:ZSP} that it is a product system over the \ZS product semigroup $P\bowtie G$. This product system is denoted by $X\bowtie G$. We then consider the scenario where the $G$-action on $P$ is homogeneous. In such a case, it turns out that we can naturally define another \ZS type product, denoted by $X\widetilde\bowtie G$, which is a product system over the same semigroup $P$ (Theorem \ref{T:prop.Z}). 

In Section \ref{S:main}, we study covariant representations of \ZS actions of groups on product systems and their associated C*-algebras. A covariant representation of a \ZS action $(X,G,\beta)$ 
is a pair $(\psi, U)$ consisting of a Toeplitz representation of $X$ and a unitary representation $U$ of $G$ that satisfy a covariant relation.
First, we exhibit a one-to-one correspondence between the set of all covariant representations $(\psi, U)$ of $(X, G,\beta)$ and the set of all Toeplitz representations $\Psi$ of $X\bowtie G$
 (Theorem \ref{T:Upsi}). 
As an immediate consequence, we obtain a Hao-Ng isomorphism theorem: $\cT_{X\bowtie G}\cong \cT_X\bowtie G$ (Corollary \ref{C:HaoNgT}). 
Furthermore, we show that $\psi$ is Cuntz-Pimsner covariant if 
and only if so is $\Psi$, and so the Hao-Ng isomorphism also holds true for the associated Cuntz-Pimsner C*-algebras: $\cO_{X\bowtie G}\cong \cO_X\bowtie G$ (Theorem \ref{thm.cp} and Corollary \ref{C:HaoNgC}). 
Moreover, if $(X,G,\beta)$ is homogeneous, then 
$\cT_{X\bowtie G}\cong \cT_X\bowtie G\cong \cT_{X\widetilde\bowtie G}$ (Theorem \ref{T:Upsi.homo} and Corollary \ref{C:HaoNgT}). 
However, we do not know whether one has $\cO_{X\widetilde\bowtie G}\cong \cO_X\bowtie G$. Indeed, this is still unknown 
even in the special case of semi-direct products. This is related to an open problem of Hao-Ng in the literature. Lastly, when the semigroup $P$ is right LCM and $X$ is compactly aligned, we show that $\psi$ is 
Nica covariant if and only if so is $\Psi$ (Theorem \ref{T:Ncov}). As a result, we have the Hao-Ng isomorphism theorem for Nica-Toeplitz algebras as well: $\cN\cT_{X\bowtie G}\cong \cN\cT_X\bowtie G$ (Corollary \ref{C:HaoNgNT}).

Finally, in Section \ref{S:EX}, we present some examples of \ZS actions of groups on product systems and their C*-algebras.

\section{Preliminaries}

In this section, we provide some necessary background for later use. 

\subsection{\ZS products}  

Let $P$ be a (discrete) semigroup and $G$ be a (discrete) group. 
By convention, in this paper, we always assume that \textsf{a semigroup $P$ has an identity}, written as $e$, unless otherwise specified. 
To define a \ZS product semigroup of $P$ and $G$, we first need two actions between $P$ and $G$, given by
\begin{enumerate}
\item a left $G$-action on $P$: $G\times P \to P$, denoted by $(g,p)\mapsto g\cdot p$, and 
\item a right $P$-action on $G$ (also called a restriction map): $P\times G\to G$, denoted by $(p,g)\mapsto g|_p$. 
\end{enumerate}
%Let $e$ and $e$ be the identities in $P$ and $G$ respectively. 
Suppose that the two actions satisfy the following compatibility relations:
\begin{multicols}{2}
\begin{enumerate}
\item[(ZS1)]\label{cond.ZS1} $e\cdot p=p$;
\item[(ZS2)]\label{cond.ZS2} $(gh)\cdot p=g\cdot (h\cdot p)$;
\item[(ZS3)]\label{cond.ZS3} $g\cdot e=e$;
\item[(ZS4)]\label{cond.ZS4} $g|_{e}=g$;
\item[(ZS5)]\label{cond.ZS5} $g\cdot (pq)=(g\cdot p)(g|_p \cdot q)$;
\item[(ZS6)]\label{cond.ZS6}$g|_{pq}=(g|_p)|_q$;
\item[(ZS7)]\label{cond.ZS7} $e|_p=e$;
\item[(ZS8)]\label{cond.ZS8} $(gh)|_p=g|_{h\cdot p} h|_p$.
\end{enumerate}
\end{multicols}
\noindent
Then the \ZS product semigroup $P\bowtie G$ is defined by $P\bowtie G=\{(p,g): p\in P, g\in G\}$ with multiplication $(p,g)(q,h)=(p(g\cdot q), g|_q h)$. This semigroup has an identity $(e, e)$. 

Recall that a left cancellative semigroup $P$ is called a right LCM semigroup if for any $p,q\in P$, either $pP\cap qP=\emptyset$ or $pP\cap qP=rP$ for some $r\in P$. In the case when $P$ is a right LCM semigroup, $P\bowtie G$ is known to be a right LCM semigroup as well  \cite[Lemma 3.3]{BRRW}. 

\subsection{Product systems}

We give a brief overview of product systems. One may refer to \cite{Fowler2002} for a more detailed discussion.

\begin{definition} Let $\cA$ be a unital C*-algebra and $P$ a semigroup. A \textit{product system over $P$} with coefficient $\cA$ is defined as $X=\bigsqcup_{p\in P} X_p$ consisting of $\cA$-correspondences $X_p$ and an associative multiplication $\cdot:X_p \times X_q\to X_{pq}$ such that
\begin{enumerate}
\item $X_e=\cA$ as an $\cA$-correspondence;
\item for any $p,q\in  P$, the multiplication map on $X$ extends to a unitary $M_{p,q}: X_p\otimes X_q\to X_{pq}$;
\item the left and right module multiplications by $\cA$ on $X_p$ coincides with the multiplication maps on $X_e\times X_p\to X_p$ and $X_p\times X_e\to X_p$, respectively.
\end{enumerate}
\end{definition}

Implicit in $M_{e,q}$ being unitary, $X_p$ must be essential, that is, $\lspan\{a\cdot x: a\in X_e, x\in X_p\}$ is dense in $X_p$. This assumption is absent in Fowler's original construction as he does not require $M_{e,q}$ to be unitary. Nevertheless, when the semigroup $P$ contains non-trivial units, every $X_p$ must be essential \cite[Remark 1.3]{KL2018}. Since the semigroups $P$ and $P\bowtie G$ often contain non-trivial units, it is reasonable to 
make such an assumption. 

For a C*-correspondence $X$, we use $\cL(X)$ to denote the set of all adjointable operators on $X$. It is a C*-algebra when equipped with the operator norm. For any $x,y\in X$, define the operator $\theta_{x,y}:X\to X$ by $\theta_{x,y}(z)=x\langle y,z\rangle$. It is clear that $\theta_{x,y}\in \cL(X)$, and we use $\cK(X)$ to denote the C*-subalgebra of $\cL(X)$ generated by $\theta_{x,y}$. The set $\cK(X)$ is also known as the generalized compact operators on $X$. 

Suppose now that $P$ is a right LCM semigroup. The notion of compactly aligned product systems, first introduced by Fowler for product systems over quasi-lattice ordered semigroups \cite{Fowler2002}, has been 
recently generalized to right LCM semigroups
 in \cite{BLS2018b, KL2018}. For any $p,q\in P$, there is a $*$-homomorphism $i_p^{pq}: \cL(X_p)\to\cL(X_{pq})$ by setting for any  $x\in X_p$ and $y\in X_q$,
\[i_p^{pq}(S)(xy)=(Sx)y.\]

\begin{definition}
\label{df.cpt.align}
We say that a product system $X$ is \emph{compactly-aligned} if for any $S\in\cK(X_p)$ and $T\in\cK(X_q)$  with $pP\cap qP=rP$, we have
\[i_p^r(S) i_q^r(T) \in \cK(X_r).\]
We shall use the notion $S\vee T := i_p^r(S) i_q^r(T)$.

\end{definition}

\subsection{Representations and C*-algebras of product systems}

Product systems are one of essential tools in the study of operator algebras graded by semigroups. In its simplest form, an $\cA$-correspondence $X$ can be viewed as a product system over $\mathbb{N}$, by setting $X_0=\cA$ and $X_n=X^{\otimes n}$. There are two natural C*-algebras associated with such a product system: the Toeplitz algebra $\cT_X$ and the Cuntz-Pimsner algebra $\cO_X$ \cite{MS1998, Pimsner1997}. Fowler generalizes these to product systems over countable semigroups \cite{Fowler2002}. 

In \cite{Nica1992}, Nica introduced the semigroup C*-algebra of a quasi-lattice ordered semigroup. It is the universal C*-algebra generated by isometric semigroup representations that satisfy a covariance condition, now known as the Nica-covariance condition. This extra covariance condition soon found an analogue in the C*-algebra related to product systems. In \cite{Fowler2002, FR1998, BLS2018b}, the Nica-Toeplitz algebra $\cN\cT_X$ is defined by imposing the extra Nica-covariance condition, thereby being a quotient of the corresponding Toeplitz algebra. Here, we give a brief overview of these three C*-algebras and their representations associated with a product system. 

\begin{definition} Let $X$ be a product system over a semigroup $P$. A \textit{(Toeplitz) representation} of $X$ on a C*-algebra $\cB$ consists of a collection of linear maps $\psi=(\psi_p)_{p\in P}$, where for each $p\in P$, $\psi_p:X_p\to \cB$, such that
\begin{enumerate}
    \item $\psi_e$ is a $*$-homomorphism of the C*-algebra $X_e$;
    \item for all $p,q\in P$ and $x\in X_p, y\in X_q$, $\psi_{p}(x)\psi_q(y)=\psi_{pq}(xy)$; and 
    \item for all $p\in P$ and $x, y\in X_p$, $\psi_p(x)^* \psi_p(y)=\psi_e(\langle x,y\rangle)$. 
\end{enumerate}
\end{definition}

Given a representation $\psi:X\to \cB$, there is a homomorphism $\psi^{(p)}: \cK(X_p)\to \cB$ satisfying $\psi^{(p)}(\theta_{x,y})=\psi_p(x)\psi_p(y)^*$. The left action of $X_e$ on $X_p$ induces a $*$-homomorphism $\phi_p: X_e\to \cL(X_p)$ by $\phi_p(a)x=a\cdot x$. 

\begin{definition} A representation $\psi$ is called \textit{Cuntz-Pimsner covariant} if for all $p\in P$ and $a\in X_e$ with $\phi_p(a)\in \cK(X_p)$, $\psi^{(p)}(\phi_p(a))=\psi_e(a)$.
\end{definition}

By \cite[Propositions 2.8 and 2.9]{Fowler2002}, there is a universal C*-algebra $\cT_X$ (resp. $\cO_X$) for Toeplitz (resp. Cuntz-Pimsner covariant) representations. Specifically, there is a universal Toeplitz representation $i_X$ (resp. a universal Cuntz-Pimsner covariant representation $j_X$) such that the following properties hold:
\begin{enumerate}
    \item The C*-algebras $\cT_X$ and $\cO_X$ are generated by $i_X$ and $j_X$ respectively. That is, $\cT_X=C^*(i_X(X))$ and $\cO_X=C^*(j_X(X))$.
    \item For every Toeplitz (resp. Cuntz-Pimsner covariant) representation $\psi$, there exists a $*$-homomorphism $\psi_*$ from $\cT_X$ (resp. $\cO_X$) to $C^*(\psi(X))$ such that $\psi=\psi_*\circ i_X$ (resp. $\psi=\psi_* \circ j_X$).
\end{enumerate}

The Toeplitz C*-algebra $\cT_X$ is often quite large as a C*-algebra. For example, for the trivial product system $X$ over $\bN^2$,
its Toeplitz representation is determined by a pair of commuting isometries. The Toeplitz algebra $\cT_X$ of this product system is thus the universal C*-algebra generated by a pair of commuting isometries, which is known to be non-nuclear \cite{Murphy1996b}. This motivated  Nica to study the semigroup C*-algebras of quasi-lattice ordered semigroups \cite{Nica1992} by imposing what is now known as the Nica-covariance condition. This condition is soon generalized to representations of product systems. In \cite[Definition 5.1]{Fowler2002}, Fowler defined the notion of compactly aligned product system and extended the Nica-covariance condition to such product systems. In \cite[Definition 6.4]{BLS2018b}, the Nica-covariance condition is further generalized to compactly aligned product systems over right LCM semigroups. We now give a brief overview of the Nica-covariance condition.

Recall that if $X$ is compactly aligned, then for any $S\in\cK(X_p)$ and $T\in\cK(X_q)$  with $pP\cap qP=rP$, 
\[S\vee T:=i_p^r(S) i_q^r(T) \in \cK(X_r).\]

\begin{definition}\label{df.NC.rep} Let $X$ be a compactly aligned product system over a right LCM semigroup $P$. A representation $\psi$ is called \emph{Nica-covariant} if for all $p,q\in P$ and $S\in\cK(X_p)$ and $T\in \cK(X_q)$, we have,
\[\psi^{(p)}(S) \psi^{(q)}(T) = \begin{cases} 
\psi^{(r)}(S \vee T) & \text{ if } pP\cap qP=rP, \\
0 & \text{ if } pP\cap qP=0.
\end{cases}
\]
\end{definition}

One can verify that this definition does not depend on the choice of $r$  (see \cite{BLS2018b}). The Nica-Toeplitz C*-algebra $\cN\cT_X$ can be then defined as the universal C*-algebra 
generated by the Nica-covariant Toeplitz representations of the product system $X$.

%%%%%%%%%%%%%
\section{Zappa-Sz\'{e}p products of Product Systems by Groups}

We first introduce the notion of \ZS action of a group on a product system. This allows us to define the \ZS product of a product system by a group, which is a product system over 
the given \ZS product semigroup. In the special case when the \ZS action is homogeneous, it turns out that we can construct another \ZS type product system, 
which is a product system over the same semigroup.

In the remaining of this section, let $P\bowtie G$ be a \ZS product of a semigroup $P$ and a group $G$, $\cA$ a unital C*-algebra, and $X=\bigsqcup_{p\in P} X_p$ a product system over $P$ with coefficient $\cA$. 

\subsection{Zappa-Sz\'{e}p actions} 

We first need a key notion of the \ZS action of $G$ on $X$. The product system $X$ naturally defines a ``$G$-restriction map" on $X$ by inheriting the $G$-restriction map on $P$. However, one has to define a $G$-action map on the product system $X$ that mimics the $G$-action on the semigroup $P$. 

\begin{definition} 
\label{D:beta} 
(i) Let $P\bowtie G$ be a \ZS product of a semigroups $P$ and a group $G$, and $X$ a product system over $P$.
A  \textit{\ZS action of $G$ on $X$} is a collection of functions $\{\beta_g\}_{g\in G}$ that satisfies the following conditions: 
\begin{enumerate}
\item[(A1)]\label{cond.A1} for each $p\in P$ and $g\in G$, $\beta_g: X_p \to X_{g\cdot p}$ is a $\mathbb{C}$-linear map;
\item[(A2)]\label{cond.A2} $\beta_g\circ\beta_h=\beta_{gh}$ for all $g,h\in G$;
\item[(A3)]\label{cond.A3} the map $\beta_{e}$ is the identity map;
\item[(A4)]\label{cond.A4} for each $g\in G$, the map $\beta_g$ is a $*$-automorphism on the C*-algebra $\cA$;
\item[(A5)]\label{cond.A5} for each $g\in G$ and $p,\, q\in P$,
\[\beta_g(xy)=\beta_g(x)\beta_{g|_p}(y) \qforal x\in X_p, \, y\in X_q;\] 
\item[(A6)]\label{cond.A6} for each $g\in G$ and $p\in P$,
\[
\langle\beta_g(x), \beta_g(y)\rangle=\beta_{g|_p}(\langle x, y\rangle)\qforal x,\, y\in X_p.
\] 
\end{enumerate}
(ii) If $\beta$ is a \ZS action of $G$ on $X$, we call the triple $(X,G,\beta)$ a \textit{\ZS system}.
\end{definition}

\smallskip
Some remarks are in order. 
\begin{rem}
\label{R:beta}
In Definition \ref{D:beta}, to be more precise, the collection $\{\beta_g\}_{g\in G}$ should be written as $\{\beta_g^p\}_{g\in G,p\in P}$, so that $\beta_g^p: X_p\to X_{g\cdot p}$. 
But it is usually clear from the context to see which $X_p$ the map $\beta_g$ acts on. So, in order to simplify our notation, we just write $\beta_g$ instead of $\beta_g^p$. 
\end{rem}

\begin{rem}
In the very special case when both the $G$-action and the $G$-restriction are trivial, that is, $g\cdot p=p$ and $g|_p=g$ for all $g\in G$ and $p\in P$, Definition \ref{D:beta} corresponds to the 
sense of a group action on a product system in \cite{DOK20}. For a C*-correspondence $X$ (that is a product system over $\bN$), our definition coincides with the group action considered in \cite[Definition 2.1]{HaoNg2008}. 
\end{rem}

\begin{rem}
It follows from \hyperref[cond.A1]{(A1)}-\hyperref[cond.A3]{(A3)} that $\beta_g^p$ is a bijection between $X_p$ and $X_{g\cdot p}$. However, $\beta_g$ is not an $\cA$-linear map between $\cA$-correspondences. In fact, the condition \hyperref[cond.A5]{(A5)} forces $\beta_g(xa)=\beta_g(x)\beta_{g|_p}(a)$ and $\beta_g(ax)=\beta_g(a) \beta_g(x)$ for $a\in \cA$ and $x\in X_p$. 
\end{rem}

\subsection{The Zappa-Sz\'{e}p product system $X\bowtie G$ of $(X,G,\beta)$} 

Let $(X,G,\beta)$ be a \ZS system as defined in Definition \ref{D:beta}. 
For each $p\in P$ and $g\in G$, set 
\[
Y_{(p,g)}:=\{x\otimes g: x\in X_p\}.
\]
Define the left and right actions of $\cA$ on $Y_{(p,g)}$ by
\[
a\cdot (x\otimes g)=(a\cdot x)\otimes g,\ (x\otimes g)\cdot a = x\beta_g(a) \otimes g,
\]
respectively, and an $\cA$-valued inner product to be 
\[
\langle x\otimes g, y\otimes g\rangle = \beta_{g^{-1}}(\langle x,y\rangle).
\]

\begin{proposition} 
With the notation same as above, $Y_{(p,g)}$ is an $\cA$-correspondence.
\end{proposition}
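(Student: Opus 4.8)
The plan is to verify directly the defining axioms of an $\cA$-correspondence: that $Y_{(p,g)}$ is a right Hilbert $\cA$-module under the stated right action and inner product, and that the prescribed left action of $\cA$ is implemented by adjointable operators. The whole argument rests on the single structural fact, supplied by \condref{A4}, that $\beta_g$ (hence $\beta_{g^{-1}}$) is a $*$-automorphism of $\cA$, together with \condref{A2}--\condref{A3}, which give $\beta_{g^{-1}}\circ\beta_g=\beta_e=\id$. I would first record that $x\mapsto x\otimes g$ is a $\bC$-linear bijection of $X_p$ onto $Y_{(p,g)}$, so that the task amounts to transporting and untwisting the module structure of $X_p$ along this map.

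For the right action I would check the module identity $((x\otimes g)\cdot a)\cdot b=(x\otimes g)\cdot(ab)$, which reduces at once to the multiplicativity of $\beta_g$. For the inner product $\langle x\otimes g, y\otimes g\rangle=\beta_{g^{-1}}(\langle x,y\rangle)$ I would verify in turn: conjugate symmetry, using that $\beta_{g^{-1}}$ is $*$-preserving and $\langle x,y\rangle^*=\langle y,x\rangle$; right $\cA$-linearity, where the computation $\langle x\otimes g,(y\otimes g)\cdot a\rangle=\beta_{g^{-1}}(\langle x,y\rangle\,\beta_g(a))=\beta_{g^{-1}}(\langle x,y\rangle)\,a$ uses precisely $\beta_{g^{-1}}\circ\beta_g=\id$; positivity, since a $*$-automorphism is positive; and definiteness, since a $*$-automorphism is injective and $X_p$ is definite.

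Next I would address completeness. Because $\beta_{g^{-1}}$ is a $*$-automorphism it is isometric, so $\|x\otimes g\|^2=\|\beta_{g^{-1}}(\langle x,x\rangle)\|=\|\langle x,x\rangle\|=\|x\|^2$; thus $x\mapsto x\otimes g$ is an isometric linear bijection from the complete space $X_p$ onto $Y_{(p,g)}$, forcing $Y_{(p,g)}$ to be complete and hence a genuine right Hilbert $\cA$-module. Finally, for the left action $a\cdot(x\otimes g)=(a\cdot x)\otimes g$, I would show that it defines a $*$-homomorphism $\phi\colon\cA\to\cL(Y_{(p,g)})$: multiplicativity is inherited from the left action on $X_p$, while adjointability follows from $\langle a\cdot(x\otimes g),y\otimes g\rangle=\beta_{g^{-1}}(\langle a\cdot x,y\rangle)=\beta_{g^{-1}}(\langle x,a^*\cdot y\rangle)=\langle x\otimes g,a^*\cdot(y\otimes g)\rangle$, giving $\phi(a)^*=\phi(a^*)$.

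I expect no genuine obstacle: every clause is a one-line computation, and the proof is essentially a transport-of-structure argument along the isometry $x\mapsto x\otimes g$. The only points demanding a moment's care are those where the twist by $\beta_{g^{-1}}$ must be cancelled against a $\beta_g$ coming from the right action, and these are all handled uniformly by the automorphism property; in particular one must be careful not to interchange $\beta_g$ and $\beta_{g^{-1}}$ when checking right $\cA$-linearity and adjointability of the left action.
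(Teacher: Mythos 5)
Your proof is correct and takes essentially the same route as the paper's: a direct verification of the Hilbert module axioms, with every twist resolved by the fact that $\beta_{g^{-1}}$ is a $*$-automorphism satisfying $\beta_{g^{-1}}\circ\beta_g=\mathrm{id}$, and completeness obtained from the isometry $x\mapsto x\otimes g$ onto the complete space $X_p$. If anything, you are slightly more thorough than the paper, which omits definiteness and dismisses adjointability of the left action with ``one can clearly see''; your explicit computation $\langle a\cdot(x\otimes g),y\otimes g\rangle=\langle x\otimes g,a^*\cdot(y\otimes g)\rangle$ fills that in correctly.
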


\begin{proof} We first verify that $Y_{(p,g)}$ is a right $\cA$-module. 
The inner product is $\cA$-linear in the second component: take any $a\in \cA$, and $x,y\in X_p$,
\begin{align*}
\langle x\otimes g, (y\otimes g) a\rangle &= \langle x\otimes g, y\beta_g(a)\otimes g\rangle \\ 
&= \beta_{g^{-1}}(\langle x, y\beta_g(a)\rangle) \\
&= \beta_{g^{-1}}(\langle x, y\rangle \beta_g(a)) \\
&= \beta_{g^{-1}}(\langle x, y\rangle) a \\
&= \langle x\otimes g, y\otimes g\rangle a.
\end{align*}
Since $\beta_g$ is a $*$-automorphism on $\cA$, one has
\begin{align*}
\langle x\otimes g, y\otimes g\rangle^* &= \beta_{g^{-1}}(\langle x,y\rangle)^* \\
&= \beta_{g^{-1}}(\langle x,y\rangle^*) \\
&= \beta_{g^{-1}}(\langle y,x\rangle) \\
&= \langle y\otimes g, x\otimes g\rangle.
\end{align*}
Finally, $\beta_g$ is also a positive map on $\cA$ because it is a $*$-automorphism. Hence
\[
\langle x\otimes g, x\otimes g\rangle = \beta_{g^{-1}}(\langle x,x\rangle)^* \geq 0.\]
Moreover, since $\beta_g$ is isometric on $\cA$,
\[\|x\otimes g\|=\|\langle x\otimes g, x\otimes g\rangle\|^{1/2}=\|\beta_g^{-1}(\langle x, x\rangle)\|^{1/2}=\|\langle x,x\rangle\|^{1/2}=\|x\|.\]
Thus the norm on $Y_{(p,g)}$ is the same as that on $X_p$. Because $X_p$ is complete under its norm, so is $Y_{(p,g)}$. Therefore $Y_{(p,g)}$ is a Hilbert $\cA$-module. One can clearly see that the left action of $\cA$ induced from $X_p$ is a left action of $\cA$ on $Y_{p,g}$. Therefore, $Y_{p,g}$ is an $\cA$-correspondence. 
\end{proof}  

Let
\[
Y:=\bigsqcup_{(p,g)\in P\bowtie G} Y_{(p,g)}.
\] 
For each $p,q\in P$ and $g,h\in G$, define a multiplication map $Y_{(p,g)}\times Y_{(q,h)}\to Y_{(p(g\cdot q), g|_q h)}$ by
\begin{align}
\label{E:M}
((x\otimes g), (y\otimes h))\mapsto (x\beta_g(y))\otimes (g|_q)h.
\end{align}
This extends to a map $M_{(p,g),(q,h)}:Y_{(p,g)}\otimes Y_{(q,h)}\to Y_{(p(g\cdot q), g|_q h)}$.

\begin{theorem} 
\label{T:ZSP}
With the multiplication maps $M_{(p,g),(q,h)}$,  $Y$ is a product system over the \ZS product $P\bowtie G$. 
\end{theorem}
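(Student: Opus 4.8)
The plan is to verify, for the family $(Y_{(p,g)})_{(p,g)\in P\bowtie G}$ together with the maps $M_{(p,g),(q,h)}$, the three defining axioms of a product system over $P\bowtie G$: that $Y_{(e,e)}$ is $\mathcal{A}$ as a correspondence, that each multiplication map extends to a unitary of the internal tensor product onto the correct fiber, and that the multiplication is associative. The previous proposition already supplies that every $Y_{(p,g)}$ is an $\mathcal{A}$-correspondence, so the work concentrates on the multiplicative structure.

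The first and third axioms are short. Since $X_e=\mathcal{A}$ and $\beta_e=\mathrm{id}$ by (A3), the assignment $x\otimes e\mapsto x$ identifies $Y_{(e,e)}$ with $X_e=\mathcal{A}$: the inner product becomes $\langle x\otimes e,y\otimes e\rangle=\beta_e(\langle x,y\rangle)=\langle x,y\rangle$ and both module actions reduce to the $\mathcal{A}$-actions on itself. For the third axiom I would compute directly: using $g|_e=g$ (ZS4) the formula in $M_{(p,g),(e,e)}$ returns $x\beta_g(a)\otimes g$, which is exactly $(x\otimes g)\cdot a$; and using $e|_q=e$ (ZS7) together with $\beta_e=\mathrm{id}$, the map $M_{(e,e),(q,h)}$ returns $(a\cdot y)\otimes h=a\cdot(y\otimes h)$.

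The heart of the proof is the second axiom, which I would carry out in three steps. (a) \emph{Well-definedness:} I would check that the bilinear formula \eqref{E:M} is balanced over $\mathcal{A}$, so it descends to $Y_{(p,g)}\otimes_{\mathcal{A}}Y_{(q,h)}$. The key identity is $\beta_g(a\cdot y)=\beta_g(a)\beta_g(y)$ for $a\in\mathcal{A}$, which follows from (A5) with $g|_e=g$ (ZS4); combined with associativity in $X$ it gives $M((x\otimes g)\cdot a,\,y\otimes h)=M(x\otimes g,\,a\cdot(y\otimes h))$. (b) \emph{Isometry:} for $\xi_i=x_i\otimes g$ and $\eta_i=y_i\otimes h$ I would compute $\langle M(\xi_1\otimes\eta_1),M(\xi_2\otimes\eta_2)\rangle$ and match it with the internal-tensor-product inner product $\langle\eta_1,\langle\xi_1,\xi_2\rangle\cdot\eta_2\rangle$. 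This uses that the multiplication of $X$ preserves inner products, the rewriting $\langle x_1,x_2\rangle\cdot\beta_g(y_2)=\beta_g\big(\beta_{g^{-1}}(\langle x_1,x_2\rangle)\cdot y_2\big)$, and then (A6) to pull $\beta_g$ through the inner product, producing the factor $\beta_{g|_q}$; since $((g|_q)h)^{-1}=h^{-1}(g|_q)^{-1}$, the identity $\beta_{((g|_q)h)^{-1}}\circ\beta_{g|_q}=\beta_{h^{-1}}$ from (A2) makes both sides collapse to $\beta_{h^{-1}}\big(\langle y_1,\beta_{g^{-1}}(\langle x_1,x_2\rangle)\cdot y_2\rangle\big)$. (c) \emph{Surjectivity:} each $\beta_g\colon X_q\to X_{g\cdot q}$ is a bijection (by (A1)–(A3), with inverse $\beta_{g^{-1}}$), and the multiplication $X_p\otimes X_{g\cdot q}\to X_{p(g\cdot q)}$ is already onto in $X$, so the elements $x\beta_g(y)$ are total in $X_{p(g\cdot q)}$ and the image of $M$ is dense. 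Being isometric, $M$ is then unitary onto $Y_{(p(g\cdot q),\,g|_q h)}$; left $\mathcal{A}$-linearity is immediate from the formula and the left action on $X$.

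Associativity is where the two families of axioms interact most intricately, and I expect it to be the main obstacle. For $x\in X_p$, $y\in X_q$, $z\in X_r$ I would expand both $((x\otimes g)(y\otimes h))(z\otimes k)$ and $(x\otimes g)((y\otimes h)(z\otimes k))$. On the module side the decisive step is $\beta_g(y)\,\beta_{g|_q}(\beta_h(z))=\beta_g(y\,\beta_h(z))$, obtained from (A5) (after writing $\beta_{(g|_q)h}=\beta_{g|_q}\circ\beta_h$ by (A2)) together with associativity in $X$, so that both triple products have module part $x\,\beta_g(y\beta_h(z))$. On the group side one must check that the two restriction labels coincide: this is exactly where (ZS6), giving $(g|_q)|_{h\cdot r}=g|_{q(h\cdot r)}$, and (ZS8), giving $((g|_q)h)|_r=(g|_q)|_{h\cdot r}\,h|_r$, are used to reduce the left-associated label to $g|_{q(h\cdot r)}\,h|_r\,k$, which matches the right-associated label by associativity of the group product. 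The careful bookkeeping of these restriction labels against the module computation is the delicate part; the remaining verifications are routine.
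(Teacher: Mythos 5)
Your proposal is correct and follows essentially the same route as the paper's proof: the identification $Y_{(e,e)}\cong\cA$, the inner-product comparison using unitarity of the multiplication in $X$ together with (A6) and (A2) to establish that $M_{(p,g),(q,h)}$ is isometric, and the use of (A5), (ZS6), (ZS8) to match both the module parts and the restriction labels in the associativity check. If anything, you are slightly more thorough than the paper, which leaves implicit both the $\cA$-balanced well-definedness of the multiplication map and the dense-range argument needed to upgrade the isometry to a unitary.
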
 

\begin{proof} First of all, the identity of $P\bowtie G$ is $(e, e)$, and one can easily check that $Y_{(e, e)}\cong \cA$ by identifying $a\otimes e\in Y_{(e,e)}$ with $a\in \cA$. For any $a\in \cA$ (that is $a\otimes e\in Y_{(e,e)}$), and the left and right $\cA$-actions on $Y_{(p,g)}$ are implemented by the multiplications:
\begin{align*}
a(x\otimes g) &= ax\otimes g = (a\otimes e)(x\otimes g), \\
(x\otimes g)a &= x\beta_g(a) \otimes g = (x\otimes g)(a\otimes e).
\end{align*}

By \condref{A1}, $\beta_g$ is a $\bC$-linear isomorphism from $X_p$ to $X_{g\cdot p}$. Therefore, if $a\in X_p$ and $b\in X_q$, 
\[a\beta_g(b)\in X_p X_{g\cdot q}\in X_{p(g\cdot q)}.\]
One can easily see that $M_{(p,g),(q,h)}$ is an $\cA$-linear map. To show that it is unitary, 
for any $x_p, u_p\in X_p$ and $y_q, v_q\in X_q$,
\begin{align*}
&\langle (x_p\otimes g)\otimes (y_q\otimes h), (u_p\otimes g)\otimes (v_q\otimes h)\rangle \\
=& 
\langle (y_q\otimes h), \langle(x_p\otimes g), (u_p\otimes g)\rangle (v_q\otimes h)\rangle \\
=& \langle (y_q\otimes h), \beta_g^{-1}(\langle x_p,u_p\rangle) (v_q\otimes h)\rangle \\
=& \beta_h^{-1}(\langle y_q, \beta_g^{-1}(\langle x_p,u_p\rangle) v_q\rangle).
\end{align*}
On the other hand,
\begin{align*}
&\langle M_{(p,g),(q,h)}((x_p\otimes g)\otimes (y_q\otimes h)), M_{(p,g),(q,h)}((u_p\otimes g)\otimes (v_q\otimes h))\rangle \\ 
=& 
\langle x_p\beta_g(y_q)\otimes g|_q h, u_p\beta_g(v_q)\otimes g|_p h\rangle \\
=& \beta_{h^{-1}}\beta_{(g|_p)^{-1}}(\langle x_p\beta_g(y_q), u_p\beta_g(v_q) \rangle) \\
=& \beta_{h^{-1}}\beta_{(g|_p)^{-1}}(\langle \beta_g(y_q), \langle x_p, u_p\rangle \beta_g(v_q) \rangle) \ (\text{as }M_{p,g\cdot q}\text{ is unitary}) \\
=&  \beta_{h^{-1}}\beta_{(g|_p)^{-1}}(\langle \beta_g(y_q), \beta_g(\beta_g^{-1}(\langle x_p, u_p\rangle) v_q) \rangle) \\
=& \beta_h^{-1}(\langle y_q, \beta_g^{-1}(\langle x_p,u_p\rangle) v_q\rangle )\ (\text{by (A6)}).
\end{align*}
Therefore, $M_{(p,g),(q,h)}$ preserves the inner product and is thus unitary. 

Finally, for $a\in X_p$, $b\in X_q$, $c\in X_r$ and $g,h,k\in G$, we compute that 
\[((a\otimes g)(b\otimes h))(c\otimes k) = (a\beta_g(b)\beta_{g|_q h}(c))\otimes (g|_q h)|_r k\]
and
\[(a\otimes g)((b\otimes h)(c\otimes k)) = (a\beta_g(b\beta_h(c)))\otimes g|_{q(h\cdot r)} h|_r k.\]
From \condref{A5} one has
\[a\beta_g(b\beta_h(c)) = a\beta_g(b) \beta_{g|_q}(\beta_h(c)) = a\beta_g(b)\beta_{g|_q h}(c).\]
Also \condref{ZS8} and \condref{ZS6} yield
\[(g|_q h)|_r k = (g|_q)|_{h\cdot r} h|_r k = g|_{q(h\cdot r)} h|_r k.\]
Hence the multiplication is associative. Therefore $Y=(Y_{(p,g)})_{(p,g)\in P\bowtie G}$ is a product system over the \ZS semigroup $P\bowtie G$.
\end{proof} 

\begin{defn}
The new product system $Y$ constructed from $(X,G,\beta)$ in Theorem \ref{T:ZSP} is called the \textit{\ZS product of $X$ by $G$} and denoted as $X\bowtie G$. 
\end{defn}

\subsection{Another Zappa-Sz\'{e}p product $X\widetilde\bowtie G$ of a homogeneous \ZS system}

In this subsection, we study a special class of \ZS actions of groups on product systems -- homogeneous \ZS actions. Given such a \ZS action, it turns out that 
\textit{another} new natural and interesting product system $X\widetilde\bowtie G$ can be constructed from the given action. Unlike $X\bowtie G$ that enlarges the grading semigroup and keeps the coefficient C*-algebra the same,
 this new one, $X\widetilde\bowtie G$, enlarges the coefficient C*-algebra and keeps 
the grading semigroup the same. 

\label{S:homog}
\begin{definition} 
Let $P\bowtie G$ be a \ZS product of a semigroups $P$ and a group $G$, and $X$ a product system over $P$.  
A \ZS action $\beta$ on $X$ is called \textit{homogeneous} if $g\cdot p=p$ for any $p\in P$ and $g\in G$. 
In this case,  the \ZS system $(X,G,\beta)$ is also said to be \textit{homogeneous}. 
\end{definition}

\begin{rem}
Homogeneous \ZS actions are a natural generalization of usual generalized gauge actions \cite{K2017}. 
\end{rem}

In the case of a homogeneous \ZS system $(X,G,\beta)$, one can easily see that $\beta$
induces an automorphic action $\beta:G\to \Aut(X_p)$. This allows us to construct a new crossed product type product system over the same semigroup $P$ that encodes the \ZS structure.
In particular, when $p= e$, we obtain a C*-dynamical system $(\cA,G,\beta)$. Let $\fA=\cA\rtimes_\beta G$ be the universal C*-crossed product of this C*-dynamical system. So $\fA=C^*(a, u_g: a\in \cA, g\in G)$, where $\{a,u_g: a\in \fA, g\in G\}$ is the generator set of $\fA$. 
Thus the generators satisfy the covariance condition
\[u_g a = \beta_g(a) u_g\qforal a\in \cA\text{ and }g\in G.\]

For each $p\in P$, consider $Z_p^0=c_{00}(G,X_p)=\spn\{x\otimes g: x\in X_p\}$. We can put an $\fA$-bimodule structure on $Z_p^0$: for any $a u_h\in \fA$ and $\xi=x_p\otimes g\in c_{00}(G,X_p)$,
\begin{align*}
    (au_h)\xi = (a\beta_h(x_p))\otimes h|_p g \quad \text{ and }\quad 
    \xi (au_h) = (x_p\beta_g(a)) \otimes gh.
\end{align*}
Define an $\fA$-valued function $\langle\cdot,\cdot\rangle: Z_p^0\times Z_p^0\to \fA$ by setting, for any $x_p\otimes g, y_p\otimes h\in Z_p^0$,
\[\langle x_p \otimes g, y_p \otimes h\rangle=\beta_{g^{-1}}(\langle x_p, y_p\rangle) u_{g^{-1}h}.\]
By the covariance relation on $\fA$, one can rewrite the above identity as 
\[\langle x_p \otimes g, y_p \otimes h\rangle=u_{g^{-1}} \langle x_p, y_p\rangle u_{h}.\]

\begin{proposition} The space $Z_p^0$ together with the map $\langle\cdot,\cdot\rangle$ is an inner product right $\fA$-module. 
\end{proposition}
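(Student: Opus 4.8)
The plan is to verify the axioms of an inner product right $\fA$-module: first confirm that the prescribed right action makes $Z_p^0$ a right $\fA$-module (which requires checking associativity of the action and compatibility with the module operations), then verify that $\langle\cdot,\cdot\rangle$ is $\fA$-linear in the second variable, conjugate-symmetric, and positive definite. Because the crossed product $\fA = \cA \rtimes_\beta G$ is generated by elements of the form $a u_h$, it suffices throughout to check each identity on these generators and extend by linearity and continuity. The homogeneity hypothesis $g\cdot p = p$ is what keeps everything inside a single fiber $X_p$, so that the automorphic action $\beta : G \to \Aut(X_p)$ is well-defined and all the expressions land back in $Z_p^0$.

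First I would confirm the right action is well-defined and associative. Given $\xi = x_p \otimes g$, the formula $\xi(au_h) = (x_p\beta_g(a))\otimes gh$ should satisfy $(\xi(au_h))(bu_k) = \xi((au_h)(bu_k))$. The right-hand side uses the crossed product multiplication $(au_h)(bu_k) = a\beta_h(b)u_{hk}$, so I would expand both sides using \hyperref[cond.A2]{(A2)} (namely $\beta_g\circ\beta_h = \beta_{gh}$) and \hyperref[cond.A5]{(A5)} restricted to the homogeneous case, where $g|_p$ simplifies appropriately. The key simplification is that $\beta_g(a\beta_h(b)) = \beta_g(a)\beta_{gh}(b)$, which matches the grading $x_p\beta_{gh}(\cdot)$ on the left side. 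I would similarly check that the unit $1 = 1_{\cA}u_e$ acts as the identity, using \hyperref[cond.A3]{(A3)}.

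Next I would verify the inner product axioms, which is where the interplay between $\beta$ and the crossed-product covariance becomes essential. For right $\fA$-linearity, I would compute $\langle x_p\otimes g, (y_p\otimes h)(au_k)\rangle$ and show it equals $\langle x_p\otimes g, y_p\otimes h\rangle (au_k)$; the cleaner of the two given formulas, $\langle x_p\otimes g, y_p\otimes h\rangle = u_{g^{-1}}\langle x_p,y_p\rangle u_h$, makes this nearly transparent, since right multiplication by $au_k$ then only touches the trailing $u_h$ factor, and the inner-product axiom on $X_p$ together with the covariance relation $u_g a = \beta_g(a)u_g$ handles the rest. Conjugate symmetry follows from $(\langle x_p,y_p\rangle)^* = \langle y_p, x_p\rangle$ in $X_p$ combined with $u_g^* = u_{g^{-1}}$, again reading off the $u_{g^{-1}}\langle\cdot,\cdot\rangle u_h$ form.

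\textbf{The main obstacle} will be positivity: showing $\langle\xi,\xi\rangle \geq 0$ in $\fA$ for a general $\xi = \sum_i x_i\otimes g_i \in Z_p^0$, not merely for a single generator. For one term the expression $u_{g^{-1}}\langle x_p,x_p\rangle u_g$ is manifestly positive, being of the form $b^* c b$ with $b = u_g$ and $c = \langle x_p,x_p\rangle \geq 0$ in $\cA$. For a finite sum, however, I expect to realize $\langle\xi,\xi\rangle$ as $T^*T$ for a suitable element built from the $x_i$ and $u_{g_i}$, or equivalently to recognize the matrix $(\beta_{g_i^{-1}}(\langle x_i, x_j\rangle)u_{g_i^{-1}g_j})_{i,j}$ as a positive element by exhibiting it as a product of a row with its adjoint; concretely, writing $\langle\xi,\xi\rangle = \bigl(\sum_i \langle x_i, \cdot\rangle u_{g_i}\bigr)^*\bigl(\sum_i \langle x_i,\cdot\rangle u_{g_i}\bigr)$ in a completed-square form should force positivity. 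This amounts to checking that the ``obvious'' identification of $Z_p^0$ with a subspace on which $\fA$ acts is compatible with the inner product, and it is the one step where the full crossed-product structure, rather than just the generator relations, must be used.
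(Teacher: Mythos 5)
Your verification of right $\fA$-linearity and conjugate symmetry matches the paper's computations, and you correctly identify positivity as the crux. But your treatment of that crux has a real gap. The "completed square" you propose, $\langle\xi,\xi\rangle = \bigl(\sum_i \langle x_i,\cdot\rangle u_{g_i}\bigr)^*\bigl(\sum_i \langle x_i,\cdot\rangle u_{g_i}\bigr)$, is not a legitimate expression: $\langle x_i,\cdot\rangle$ is a map $X_p\to\cA$ while $u_{g_i}\in\fA$, so the factor lives in no C*-algebra you have specified, and there is no reason $T^*T$ makes sense or is positive in $\fA$. Any rigorous version of this idea (e.g.\ via a faithful representation of $\fA$ and the localization $X_p\otimes_\cA H$) secretly requires exactly the fact you are trying to avoid: that the Gram matrix $K=[\langle x_i,x_j\rangle]$ is positive in $M_n(\cA)$. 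That is the paper's route: it proves $K\geq 0$ by Paschke's criterion ($\sum_{i,j}a_i^*\langle x_i,x_j\rangle a_j=\langle\sum_i x_ia_i,\sum_j x_ja_j\rangle\geq 0$), pushes $K$ into $M_n(\fA)$ along the embedding $\cA\hookrightarrow\fA$, and then writes $\langle\xi,\xi\rangle=B^*KB$ with $B$ the column $(u_{g_1},\dots,u_{g_n})^t$. Your sketch needs this lemma (or an equivalent) spelled out; as written the key step is asserted, not proved.

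The second, more serious omission: an inner product module requires definiteness, $\langle\xi,\xi\rangle=0\Rightarrow\xi=0$, and your proposal never addresses it (your plan says "positive definite" but the obstacle paragraph only discusses $\geq 0$). This does not follow from any completed square: even granting $\langle\xi,\xi\rangle=T^*T=0$, hence $T=0$, nothing separates the individual components $x_i$, because the off-diagonal terms $u_{g_i^{-1}g_j}$ couple them. The paper's tool here is the faithful conditional expectation $\Phi:\fA\to\cA$, $\Phi(\sum_g a_gu_g)=a_e$: applying it to $\langle\xi,\xi\rangle=\sum_{i,j}\beta_{g_i^{-1}}(\langle x_i,x_j\rangle)u_{g_i^{-1}g_j}=0$ (with the $g_i$ taken distinct) kills all off-diagonal terms and leaves $\sum_i\beta_{g_i^{-1}}(\langle x_i,x_i\rangle)=0$, a sum of positives, whence each $\langle x_i,x_i\rangle=0$ and $\xi=0$. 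Some device of this kind -- the expectation, or equivalently extraction of Fourier coefficients in the crossed product -- is indispensable, and its absence is a genuine hole in your argument rather than a detail to be routinized.
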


\begin{proof}
It is easy to see that $\langle\cdot,\cdot\rangle$ is right $\fA$-linear in the second variable:
take any $au_k\in \fA$ and $x_p\otimes g, y_p\otimes h\in Z_p^0$,
\begin{align*}
    \langle x_p \otimes g, (y_p \otimes h) a u_k)\rangle &=  \langle x_p \otimes g, y_p\beta_h(a)\otimes hk\rangle \\
    &= \beta_{g^{-1}}(\langle x_p, y_p\beta_h(a)\rangle) u_{g^{-1} hk} \\
    &= \beta_{g^{-1}}(\langle x_p, y_p\rangle) \beta_{g^{-1}h}(a)  u_{g^{-1} h} u_k \\
    &= \beta_{g^{-1}}(\langle x_p, y_p\rangle) u_{g^{-1} h} a u_k \\
    &=  \langle x_p \otimes g, y_p \otimes h\rangle a u_k
\end{align*}
Also, for any $x_p\otimes g, y_p\otimes h\in Z_p^0$,
\begin{align*}
     \langle x_p \otimes g, y_p \otimes h\rangle^* &= \left(u_g^* \langle x_p, y_p\rangle u_h\right)^* \\
     &= u_h^* \langle y_p, x_p\rangle u_g \\
     &= \langle y_p \otimes h, x_p \otimes g\rangle.
\end{align*}
Finally, for any $x_1,\ldots,x_n\in X_p$ and $g_1,\ldots,g_n\in G$, considier $\xi=\sum_{i=1}^n x_i\otimes g_i\in Z_p^0$. We have that
\[\langle \xi, \xi\rangle = \sum_{i=1}^n \sum_{j=1}^n u_{g_i^{-1}} \langle x_i, x_j\rangle u_{g_j}. \]
Consider the $n\times n$ operator matrix $K=[\langle x_i, x_j\rangle]$. We first claim that $A\geq 0$ as an operator in $M_n(\cA)$, which is equivalent of showing \cite[Proposition 6.1]{Paschke1973} that for any $a_1,\ldots, a_n\in\cA$, 
\[\sum_{i=1}^n \sum_{j=1}^n a_i^* \langle x_i, x_j\rangle a_j \geq 0.\]
Since $X_p$ is an $\cA$-correspondence, $a_i^* \langle x_i, x_j\rangle a_j=\langle x_i a_i, x_j a_j\rangle$. Therefore,
\[\sum_{i=1}^n \sum_{j=1}^n a_i^* \langle x_i, x_j\rangle a_j=\sum_{i=1}^n \sum_{j=1}^n\langle x_i a_i, x_j a_j\rangle=\langle \sum_{i=1}^n x_i a_i, \sum_{j=1}^n x_j a_j\rangle\geq 0.\]
This proves that the operator matrix $K=[\langle x_i, x_j\rangle]\geq 0$. Since $\cA$ embeds injectively inside the crossed product $\fA=\cA\rtimes_\beta G$, the operator matrix $K\geq 0$ as an operator in $M_n(\fA)$. Therefore,
\[\langle \xi, \xi\rangle = \sum_{i=1}^n \sum_{j=1}^n u_{g_i^{-1}} \langle x_i, x_j\rangle u_{g_j}\geq 0. \]
Suppose that $\langle \xi,\xi\rangle=0$. We have 
\[\langle \xi, \xi\rangle = \sum_{i=1}^n \sum_{j=1}^n \beta_{g_i^{-1}}( \langle x_i, x_j\rangle) u_{g_i^{-1} g_j} = 0.\]
Since there exists a contractive conditional expectation $\Phi:\fA\to\cA$ by $\Phi(\sum a_g u_g)=a_e$, we have that 
\[\sum_{i=1}^n \beta_{g_i^{-1}}( \langle x_i, x_i\rangle)=0.\]
Since $\beta_{g_i^{-1}}$'s are $*$-automorphisms of $\cA$, we have that $\langle x_i, x_i\rangle =0$ for all $i$, and thus $x_i=0$ for all $i$. So we obtain that $\xi=0$. 
Therefore $\langle\cdot,\cdot\rangle$ is an $\fA$-valued inner product on $Z_p^0$. 
\end{proof}

Now let $Z_p$ be the completion of $Z_p^0$ under the norm $\|\xi\|=\|\langle \xi, \xi\rangle\|^{1/2}$. We obtain an $\fA$-correspondence $Z_p$. 

\begin{theorem}
\label{T:prop.Z} 
The collection $Z=\bigsqcup_{p\in P}Z_p$ is a product system over $P$, where the multiplication $Z_p\times Z_q\to Z_{pq}$ is given by 
\[(x_p\otimes g,y_q \otimes h)\mapsto x_p \beta_g(y_q)\otimes g|_q h \quad (g,h\in G, x_p\in X_p, y_q\in X_q).\] 
\end{theorem}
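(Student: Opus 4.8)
The plan is to verify the three defining axioms of a product system over $P$ with coefficient algebra $\fA$ — that $Z_e\cong\fA$, that the multiplication extends to unitaries $M_{p,q}\colon Z_p\otimes_\fA Z_q\to Z_{pq}$, and that the $\fA$-bimodule structure is implemented by multiplication with $Z_e$ — together with associativity. I would first identify the coefficients: the map $a\otimes g\mapsto a u_g$ carries $Z_e^0=\spn\{a\otimes g:a\in\cA,\,g\in G\}$ onto the dense $*$-subalgebra $\spn\{a u_g\}$ of $\fA$, and it intertwines the inner product, since $\langle a\otimes g,b\otimes h\rangle=u_{g^{-1}}a^*b\,u_h=(au_g)^*(bu_h)$, and both module actions (using $g|_e=g$ from \condref{ZS4}); hence it extends to a unitary identifying $Z_e$ with $\fA$ as a correspondence over itself. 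The coincidence of the left/right actions of $Z_e=\fA$ on $Z_p$ with the prescribed bimodule structure is then immediate from comparing the multiplication formula with the defining formulas, the right action using $g|_e=g$.

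Next I would check that the multiplication descends to the balanced tensor product. Since $Z_p^0=c_{00}(G,X_p)$ carries no relations beyond $\bC$-bilinearity, the formula $(x\otimes g,y\otimes h)\mapsto x\beta_g(y)\otimes g|_q h$ is well defined and bilinear, and homogeneity ($g\cdot q=q$) is precisely what places the target in $Z_{pq}$. To see it is $\fA$-balanced, I would compute $((x\otimes g)(au_k))(y\otimes h)$ and $(x\otimes g)((au_k)(y\otimes h))$ and check agreement: the $X_{pq}$-components match by \condref{A5} together with $g|_e=g$ and $\beta_g\beta_k=\beta_{gk}$ from \condref{A2}, while the group components match because $(gk)|_q=g|_q\,k|_q$ by \condref{ZS8} under homogeneity. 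Thus the multiplication induces a map $M_{p,q}\colon Z_p\otimes_\fA Z_q\to Z_{pq}$.

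The crux is proving $M_{p,q}$ preserves inner products and is onto, hence unitary. I would evaluate both $\langle M_{p,q}(\xi\otimes\eta),M_{p,q}(\xi'\otimes\eta')\rangle_{Z_{pq}}$ and the tensor-product inner product $\langle\eta,\langle\xi,\xi'\rangle_{Z_p}\eta'\rangle_{Z_q}$ on simple tensors $\xi=x\otimes g$, $\eta=y\otimes h$, $\xi'=x'\otimes g'$, $\eta'=y'\otimes h'$. On the first side, the factorization $\langle x\beta_g(y),x'\beta_{g'}(y')\rangle_{X_{pq}}=\langle\beta_g(y),\langle x,x'\rangle\beta_{g'}(y')\rangle_{X_q}$ coming from unitarity of the original $M_{p,q}$ of $X$, combined with \condref{A5}, \condref{A2} and \condref{A6}, rewrites the $\cA$-valued factor as $\beta_{g|_q}(d)$ with $d=\langle y,\beta_{g^{-1}}(\langle x,x'\rangle)\beta_{g^{-1}g'}(y')\rangle$; the covariance relation $u_{k^{-1}}\beta_k(a)=a\,u_{k^{-1}}$ then moves the flanking $u$'s past $\beta_{g|_q}(d)$. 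The tensor-product side reduces to the same $d$ flanked by $u_{h^{-1}}$ and $u_{(g^{-1}g')|_q h'}$, so matching the two sides comes down exactly to the restriction identity $(g^{-1}g')|_q=(g|_q)^{-1}\,g'|_q$. This is the step I expect to be the main obstacle, as it is where the group-level \ZS relations must be reconciled with the correspondence data: I would derive it from \condref{ZS7} and \condref{ZS8} under homogeneity, which give first $g^{-1}|_q=(g|_q)^{-1}$ (from $(g^{-1}g)|_q=e|_q=e$) and then $(g^{-1}g')|_q=g^{-1}|_q\,g'|_q$. All remaining axioms are comparatively mechanical. Surjectivity is clear since taking $g=e$ the range already contains $xy\otimes h$ for all $x\in X_p$, $y\in X_q$, $h\in G$, whose closed span is $Z_{pq}$, so $M_{p,q}$ extends to a unitary on the completions.

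Finally I would verify associativity on generators, paralleling the end of the proof of Theorem \ref{T:ZSP}: comparing $((x\otimes g)(y\otimes h))(z\otimes k)$ with $(x\otimes g)((y\otimes h)(z\otimes k))$, the $X$-components agree by \condref{A5} and \condref{A2}, while the group components agree because $(g|_q h)|_r=g|_{qr}\,h|_r$ by \condref{ZS8} and \condref{ZS6} under homogeneity. As every map involved is isometric on a dense subspace, all these identities pass to the completions, establishing that $Z=\bigsqcup_{p\in P}Z_p$ is a product system over $P$.
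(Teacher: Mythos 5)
Your proposal is correct and follows essentially the same route as the paper: identify $Z_e\cong\fA$ via $a\otimes g\mapsto au_g$, verify that the multiplication preserves the tensor-product inner product on elementary tensors (with the crux being exactly the restriction identity $(g^{-1}g')|_q=(g|_q)^{-1}g'|_q$, which the paper obtains in the equivalent form $g^{-1}|_{g\cdot q}\,i|_q=(g^{-1}i)|_q$ from \condref{ZS7}, \condref{ZS8} and homogeneity), and check associativity via \condref{A5}, \condref{ZS6}, \condref{ZS8}. Your additional checks (the $\fA$-balancedness of the multiplication and the detailed verification that $Z_e\cong\fA$) are points the paper leaves as routine, but they do not constitute a different approach.
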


\begin{proof} Let $p, q\in P$. For $x_p\in X_p$ and $y_q\in X_q$, one has $\beta_g(y_q)\in X_q$ and thus $x_p \beta_g(y_q)\in X_{pq}$. Since $\beta_g$ is automorphic as mentioned above, 
the multiplication is surjective. To see the multiplication induces a unitary map from $Z_p\otimes Z_q\to Z_{pq}$, take any four elementary tensors $x_p\otimes g, w_p\otimes i\in Z_p$ and $y_q\otimes h, z_q\otimes k\in Z_q$,
\begin{align*}
    &\langle (x_p\otimes g)\otimes (y_q \otimes h), (w_p\otimes i)\otimes (z_q \otimes k)\rangle \\ 
    =& \langle y_q \otimes h, \langle x_p\otimes g, w_p\otimes i\rangle (z_q \otimes k)\rangle \\
    =&
    \langle y_q \otimes h, \beta_{g^{-1}}(\langle x_p, w_p\rangle) u_{g^{-1}i} (z_q \otimes k)\rangle \\
    =& \langle y_q \otimes h, \beta_{g^{-1}}(\langle x_p, w_p\rangle) \beta_{g^{-1}i}(z_q) \otimes (g^{-1}i)|_q k)\rangle \\
    =& u_h^* \langle y_q, \beta_{g^{-1}}(\langle x_p, w_p\rangle) \beta_{g^{-1}i}(z_q)\rangle u_{ (g^{-1}i)|_q k}.
\end{align*}
On the other hand,
\begin{align*}
    &\langle (x_p\otimes g) (y_q \otimes h), (w_p\otimes i) (z_q \otimes k)\rangle \\ 
    =& \langle x_p \beta_g(y_q)\otimes g|_q h, w_p\beta_i(z_q)\otimes i|_q k\rangle \\
    =& u_{g|_q h}^* \langle  x_p \beta_g(y_q),  w_p\beta_i(z_q)\rangle u_{i|_q k} \\
    =& u_h^* u_{(g|_q)^{-1}} \langle \beta_g(y_q), \langle x_p, w_p\rangle \beta_i(z_q)\rangle u_{i|_q k} \\
    =& u_h^* u_{(g|_q)^{-1}} \langle \beta_g(y_q), \beta_g(\beta_{g^{-1}}(\langle x_p, w_p\rangle) \beta_{g^{-1}i}(z_q))\rangle u_{i|_q k} \\
    =& u_h^* u_{(g|_q)^{-1}} \beta_{g|_q} (\langle y_q, \beta_{g^{-1}}(\langle x_p, w_p\rangle) \beta_{g^{-1}i}(z_q)\rangle) u_{i|_q k} \\
    =& u_h^* \langle y_q, \beta_{g^{-1}}(\langle x_p, w_p\rangle) \beta_{g^{-1}i}(z_q)\rangle u_{(g|_q)^{-1}}  u_{i|_q k} \\
    =& u_h^* \langle y_q, \beta_{g^{-1}}(\langle x_p, w_p\rangle) \beta_{g^{-1}i}(z_q)\rangle u_{g^{-1}|_{g\cdot q} i|_q k}.
\end{align*}
Because $g\cdot q=i\cdot q$, 
\[u_{g^{-1}|_{g\cdot q} i|_q k}=u_{g^{-1}|_{i\cdot q} i|_q k}=u_{(g^{-1}i)|_q k}.\]
Therefore, the multiplication is indeed unitary. 

The C*-algebra $Z_{e}$ can be identified as $\fA$ via $a\otimes g\mapsto au_g$. It is routine to verify that the left and right $\fA$-action on $Z_p$ are implemented by the multiplication map by $Z_{e}$. To see the associativity of the multiplication, take $x\otimes g\in Z_p$, $y\otimes h\in Z_q$, and $z\otimes k\in Z_r$. We have that
\[  \left((x\otimes g)(y\otimes h)\right)(z\otimes k) = x\beta_g(y)\beta_{g|_q h}(z)\otimes (g|_q h)|_r k\]
and
\[  (x\otimes g)\left((y\otimes h)(z\otimes k)\right) = x\beta_g(y\beta_h(z))\otimes g|_{qr} h|_r k.\]
Condition (A5) yields $\beta_g(y\beta_h(z))=\beta_g(y)\beta_{g|_q h}(z)$. From (ZS6), (ZS8), and the homogeneity assumption that $h\cdot r=r$, 
we have \[(g|_q h)|_r k=(g|_q)|_{h\cdot r} h|_r k=g|_{qr} h|_r k.\]
This proves the associativity of the multiplication. 
\end{proof}

\begin{definition} 
The product system $Z$ obtained in Theorem \ref{T:prop.Z} is called the \textit{homogeneous \ZS product of $X$ by $G$} and denoted as $X\widetilde\bowtie G$. 
\end{definition}

In summary, for a given \textsf{homogeneous} \ZS action $\beta$ of $G$ on $X$, one has two new product systems: (i) $X\bowtie G$ -- a product system over $P\bowtie G$ with coefficient C*-algebra $\cA$,
and (ii) $X\widetilde\bowtie G$ -- a product system over $P$ with coefficient C*-algebra $\cA\rtimes_\beta G$.

%%%%%%%%%%%%%%%%
%%%%%%%%%%%%%%%%

\section{C*-algebras associated to \ZS actions \\ and some Hao-Ng isomorphism theorems}

\label{S:main}

In this section, we study covariant representations and their associated universal C*-algebras arising from a \ZS system $(X,G,\beta)$. 
We establish a one-to-one correspondence between the set of all covariant representations $(\psi, U)$ of $(X,G,\beta)$ and the set of all (Toeplitz) representations $\Psi$
of the \ZS product system $X\bowtie G$. Furthermore, it is proved that $\psi$ is Cuntz-Pimsner covariant if and only if so is $\Psi$. If $P$ is right LCM and $X$ is compactly 
aligned, then $\psi$ is Nica-covariant if and only if so is $\Psi$. As a consequence, we obtain several Hao-Ng isomorphism theorems for the associated C*-algebras. 
However, as we shall see, changes arise for the homogeneous \ZS product system $X\widetilde\bowtie G$.

\subsection{Covariant representations of $(X,G,\beta)$ and the C*-algebra $\T_X\bowtie G$}

Let $P\bowtie G$ be a \ZS product of a semigroup $P$ and a group $G$, $\A$ a unital C*-algebra, and $X$ a product system over $P$ with coefficient $\A$. Suppose that $\beta$ is an arbitrary \ZS action of $G$ on $X$.

\begin{defn}
\label{D:vrep}
Let $\psi$ be a representation of $X$ and $U$ is unitary representation of $G$ on a unital C*-algebra $\cB$. A pair $(\psi, U)$ is called a \textit{covariant representation of a \ZS system $(X,G,\beta)$} if 
\begin{align}
\label{E:Upsi}
U_g \psi_p(x)=\psi_{g\cdot p}(\beta_g(x)) U_{g|_p} \foral p\in P, g\in G, x\in X_p. 
\end{align}
\end{defn}

\begin{theorem}
\label{T:Upsi}
There is a one-to-one correspondence $\Pi$ between the set of all representations $\Psi$ of $X\bowtie G$ on a unital C*-algebra $\cB$
and the set of covariant representations $(\psi, U)$ of $(X,G,\beta)$.

In fact, for a given representation $\Psi$ of $X\bowtie G$, one has 
\begin{align*}
\psi_p(x)&:=\Psi_{(p,e)}(x\otimes e)\qforal p\in P, x\in X_p,\\
U_g&:=\Psi_{(e, g)}(1_\cA\otimes g)\qforal g\in G.
\end{align*}
Conversely, given a covariant representation $(\psi, U)$ of $(X,G,\beta)$, one has 
\begin{align}
\label{E:DefPsi}
\Psi_{(p, g)}(x\otimes g) := \psi_p(x) U_g\qforal p\in P, g\in G, x\in X_p.
\end{align}
\end{theorem}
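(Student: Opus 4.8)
The plan is to exploit the factorization in $X\bowtie G$
\[
x\otimes g=(x\otimes e)\,(1_\cA\otimes g),\qquad x\in X_p,\ g\in G,
\]
which follows from the multiplication \pref{E:M} together with \condref{A3} (so $\beta_e=\id$) and the identities $e\cdot e=e$, $e|_e=e$: indeed $(p,e)(e,g)=(p,g)$, while $(x\otimes e)(1_\cA\otimes g)=x\,\beta_e(1_\cA)\otimes(e|_e)g=x\otimes g$. Granting this, any representation $\Psi$ of $X\bowtie G$ automatically satisfies $\Psi_{(p,g)}(x\otimes g)=\Psi_{(p,e)}(x\otimes e)\,\Psi_{(e,g)}(1_\cA\otimes g)=\psi_p(x)U_g$, so formula \pref{E:DefPsi} is \emph{forced}. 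Consequently the two assignments in the statement are mutual inverses the instant both are seen to be well defined, and the real content of the theorem is simply to check that each construction produces an object of the correct type.

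For the forward direction I would fix a representation $\Psi$ and set $\psi_p(x):=\Psi_{(p,e)}(x\otimes e)$ and $U_g:=\Psi_{(e,g)}(1_\cA\otimes g)$. That $\psi$ is a Toeplitz representation is read directly off $\Psi$: the identification $Y_{(e,e)}\cong\cA$ makes $\psi_e$ a $*$-homomorphism; the product $(p,e)(q,e)=(pq,e)$ with $(x\otimes e)(y\otimes e)=xy\otimes e$ gives multiplicativity; and $\langle x\otimes e,y\otimes e\rangle=\langle x,y\rangle$ in $Y_{(e,e)}$ gives the inner-product axiom. For $U$, the product $(e,g)(e,h)=(e,gh)$ with $(1_\cA\otimes g)(1_\cA\otimes h)=1_\cA\beta_g(1_\cA)\otimes(g|_e)h=1_\cA\otimes gh$ (using $\beta_g(1_\cA)=1_\cA$ and \condref{ZS4}) yields $U_gU_h=U_{gh}$, while $\langle 1_\cA\otimes g,1_\cA\otimes g\rangle=\beta_{g^{-1}}(1_\cA)=1_\cA$ gives $U_g^*U_g=\Psi_{(e,e)}(1_\cA)$; thus $U$ is a unitary representation once one records the standing convention that $\Psi_{(e,e)}$ is unital (equivalently $U_e=1_\cB$). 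Finally, the covariance relation \pref{E:Upsi} amounts to the single identity $(e,g)(p,e)=(g\cdot p,g|_p)=(g\cdot p,e)(e,g|_p)$ in $P\bowtie G$, computed from \condref{ZS3} and \condref{ZS4}, under which both sides of \pref{E:Upsi} reduce to $\Psi_{(g\cdot p,\,g|_p)}(\beta_g(x)\otimes g|_p)$.

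For the reverse direction I would fix a covariant pair $(\psi,U)$ and define $\Psi_{(p,g)}$ by \pref{E:DefPsi}. Since $x\mapsto x\otimes g$ is a linear bijection of $X_p$ onto $Y_{(p,g)}$ and $\psi_p$ is linear, $\Psi_{(p,g)}$ is well defined and linear. Multiplicativity of $\Psi$ reduces, after using \pref{E:Upsi} to slide $U_g$ past $\psi_q(y)$, to multiplicativity of $\psi$ and of $U$ and to matching $(p,g)(q,h)=(p(g\cdot q),g|_q h)$ with the product \pref{E:M}. The inner-product axiom is the delicate step: I would first specialize \pref{E:Upsi} to $p=e$, where $g\cdot e=e$ and $g|_e=g$ collapse it to $U_g\psi_e(a)=\psi_e(\beta_g(a))U_g$, hence to the conjugation formula $U_g^*\psi_e(a)U_g=\psi_e(\beta_{g^{-1}}(a))$; then
\[
\Psi_{(p,g)}(x\otimes g)^*\,\Psi_{(p,g)}(y\otimes g)=U_g^*\,\psi_e(\langle x,y\rangle)\,U_g=\psi_e\big(\beta_{g^{-1}}(\langle x,y\rangle)\big),
\]
which is precisely $\Psi_{(e,e)}(\langle x\otimes g,y\otimes g\rangle)$ by the inner product on $Y_{(p,g)}$.

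I expect this inner-product axiom in the reverse direction to be the main obstacle, as it is the only place where one must genuinely \emph{extract} information from the covariance relation (the conjugation formula on the coefficient algebra) rather than merely translate semigroup products. The accompanying bookkeeping point is to fix the unital convention on $\Psi_{(e,e)}$ (equivalently $\psi_e(1_\cA)=1_\cB$ and $U_e=1_\cB$), without which $U$ need not consist of unitaries; with it in place, the two round trips $\Psi\mapsto(\psi,U)\mapsto\Psi$ and $(\psi,U)\mapsto\Psi\mapsto(\psi,U)$ close up immediately via the factorization of the first paragraph.
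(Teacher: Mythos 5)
Your proposal is correct and follows essentially the same route as the paper: both directions are verified by the identical computations (reading off $\psi$ and $U$ from $\Psi$ via the semigroup identities $(e,g)(p,e)=(g\cdot p,e)(e,g|_p)$, and conversely defining $\Psi$ by \pref{E:DefPsi}, with the inner-product axiom extracted from the covariance relation specialized at $p=e$). Your explicit use of the factorization $x\otimes g=(x\otimes e)(1_\cA\otimes g)$ to force \pref{E:DefPsi} and close the round-trip bijectivity, and your flagging of the unital convention on $\psi_e$ needed for $U_g$ to be genuinely unitary, are small but worthwhile refinements of points the paper leaves implicit.
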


\begin{proof}  Let $Y:=X\bowtie G$ and $\Psi$ be a representation of $Y$ on a unital C*-algebra $\cB$. 
Since $\Psi$ is a representation of $Y$, in particular we have 
\begin{align*}
%\label{E:Psi}
\Psi_{(p,g)}(x\otimes g)\Psi_{(q,h)}(y\otimes h)=\Psi_{(p(g\cdot q), g|_qh)}(x\beta_g(y)\otimes g|_qh)
\end{align*}
for all $g, h\in G$, $p,q\in P$, $x\in X_p$ and $y\in X_q$. 

For $p\in P$, define $\psi_P: X_p \to \cB$ via
\begin{align*}
%\label{E:psi}
\psi_p(x):=\Psi_{(p,e)}(x\otimes e)\qforal x\in X_p.
\end{align*}
Then 
\begin{align*}
\psi_p(x)^*\psi_p(y)
&=\Psi_{(p,e)}(x\otimes e)^*\Psi_{(p\otimes e)}(y\otimes e)\\
 &=\Psi_{(e,e)}(\langle (x\otimes e), (y\otimes e)\rangle\\
 &=\Psi_{(e,e)}(\langle x, y\rangle\otimes  e)\\
 &=\psi_{e}(\langle x, y\rangle)\ (x, y\in X_p)
\end{align*}
and 
\begin{align*}
\psi_p(x)\psi_q(y)
&=\Psi_{(p,e)}(x\otimes e)\Psi_{(q,e)}(y\otimes e)\\
&=\Psi_{(p(e\cdot q), e|_q e)}((x\otimes e)(y\otimes e))\\
&=\Psi(pq,e)(x\beta_{e}(y)\otimes e|_q e)\\
&=\Psi(pq,e)(xy\otimes e)\\
&=\psi_{pq}(xy)\ (x\in X_p, y\in X_q). 
\end{align*}
So $\psi$ is a representation of $X$ on $\cB$. 

One can easily check that $U_g:=\Psi_{(e, g)}(1_\cA\otimes g)$ is a unitary in $\cB$ with inverse $\Psi_{(e, g^{-1})}(1_\cA\otimes g^{-1})$, and that 
\[U_g U_h=\Psi_{(e, g)}(1_\cA\otimes g) \Psi_{(e, h)}(1_\cA\otimes h)= \Psi_{(e, gh)}(1_\cA\otimes gh)=U_{gh}.\] That is, 
$U$ is a unitary representation of $G$ in $\cB$. 

From the definitions of $\psi_p$ and $U_g$, for any $g\in G$ and $p\in P$, we obtain
\begin{align*}
U_g \psi_p(x) &= \Psi_{(e,g)}(1_\cA\otimes g) \Psi_{(p,e)}(x\otimes e) \\
&= \Psi_{(e,g)(p,e)}((1_\cA\otimes g)(x\otimes e)) \\
&= \Psi_{(g\cdot p,g|_p)}(\beta_g(x)\otimes g|_p) \\
&= \Psi_{(g\cdot p,e)(e,g|_p)}((\beta_g(x)\otimes e)(1_\cA\otimes g|_p)) \\ 
&= \Psi_{(g\cdot p,e)}(\beta_g(x)\otimes e) \Psi_{(e,g|_p)}(1_\cA\otimes g|_p) \\
&= \psi_{g\cdot p}(\beta_g(x)) U_{g|_p}.
\end{align*}
Thus $(\psi, U)$ satisfies the idenity \eqref{E:Upsi}. 

Conversely, given a representation $\psi$ of $X$ and a unitary representation $U$ of $G$ on a unital C*-algebra $\B$ that satisfy \eqref{E:Upsi},
define 
\[
\Psi_{(p, g)}(x\otimes g) := \psi_p(x) U_g\qforal p\in P, g\in G, x\in X_p.
\]
Then one can verify that $\Psi$ is a representation of $Y$. In fact, we have 
\begin{align*}
\Psi_{(p, g)}(x\otimes g) ^*\Psi_{(p, g)}(y\otimes g) 
&=  U_g^*\psi_p(x)^*  \psi_p(y) U_g\ (\text{by the definition of } \Psi)\\
&= U_{g^{-1}} \psi_{e}(\langle x, y\rangle) U_g \ (\text{as }\psi \text{ is a representation of } X)\\
&=\psi_{e}(\beta_{g^{-1}}\langle x, y\rangle) U_{g^{-1}|_{e}} U_g \\
&=\psi_{e}(\beta_{g^{-1}}\langle x, y\rangle)\ (\text{as }g^{-1}|_{e}=g^{-1}, U_{g^{-1}|_{e}} U_g=U_{g^{-1}}U_g=I) \\
&=\Psi_{(e,e)}(\beta_{g^{-1}}(\langle x, y\rangle)\otimes e) \\
&=\Psi_{(e,e)}(\langle x\otimes g, y\otimes g\rangle)\ (\text{by the definition of }Y)
\end{align*}
for all $p\in P$, $g\in G$, $x,y\in X_p$, and 
\begin{align*}
\Psi_{(p, g)}(x\otimes g) \Psi_{(q, h)}(y\otimes h) 
&=\psi_p(x) U_g\psi_q(y) U_h \ (\text{by definition of } \Psi)\\
&=\psi_p(x) \psi_{g\cdot q}(\beta_g(y)) U_{g|_q} U_h\\
&=\psi_{pg\cdot q}(x\beta_g(y)) U_{g|_q h}\ (\text{as }\psi \text{ is a representation of } X)\\
&=\Psi_{(pg\cdot q, g|_qh)}(x\beta_g(y)\otimes g|_q h)\ (\text{by the definition of } \Psi)\\
&=\Psi_{(pg\cdot q, g|_qh)}((x\otimes g)(y\otimes h))
\end{align*}
for all $p,q\in P$, $g\in G$, $x\in X_p$ and $y\in X_q$.
\end{proof}

\begin{eg}
\label{Eg:Fock}
Let $X$ be a product system over a semigroup $P$. Suppose that there is a \ZS action $\beta$ of $G$ on $X$. 
There is a natural nontrivial pair $(\psi, U)$ which satisfies all the conditions required in Theorem \ref{T:Upsi}.  
Indeed, let $F(X)=\bigoplus_{s\in P} X_s$ the Fock space, and $L$ the usual Fock representation:
\begin{align*}
L_p(x)(\oplus x_s) =\oplus (x\otimes x_s)\quad (x\in X_p, \ \oplus x_s\in F(X)).
\end{align*}
Then we define an action $\tilde\beta$ of $G$ on $F(X)$ as follows: 
\[
\tilde\beta_g(\oplus x_s)=\oplus (\beta_g(x_s))\quad (g\in G, \ \oplus x_s\in F(X)). 
\]
Clearly, $L$ is a representation of $X$ and $\tilde\beta$ is a unitary representation of $G$. Also one can easily verify 
\begin{align*}
\tilde\beta_g\circ L_p(x)=L_{g\cdot p}(\beta_g(x))\circ \tilde\beta_{g|_p}\quad (g\in G, x\in X_p).
\end{align*}
\end{eg}

We are now ready to define the Toeplitz type universal C*-algebra associated to a \ZS system $(X,G,\beta)$. 

\begin{definition} 
\label{D:TXG}
Let $\cT_X\bowtie G$ be the universal C*-algebra generated by covariant representations of $(X,G,\beta)$. 
\end{definition}

Example \ref{Eg:Fock} shows that the C*-algebra $\cT_X\bowtie G$ is nontrivial. 

\medskip
In what follows, we prove a result similar to Theorem \ref{T:Upsi} for the homogeneous \ZS product system $X\widetilde\bowtie G$.

\begin{theorem}
\label{T:Upsi.homo} 
Suppose that a \ZS system $(X,G,\beta)$ is homogeneous.  
Then there is a one-to-one correspondence $\widetilde\Pi$ between the set of all representations $\Psi$ of $X\widetilde\bowtie G$ on a unital C*-algebra $\cB$
and the set of all covariant representations $(\psi, U)$ of $(X,G,\beta)$.
In fact, for a given representation $\Psi$ of $X\widetilde\bowtie G$, one has 
\begin{align*}
\psi_p(x)&:=\Psi_{p}(x\otimes e)\qforal p\in P, x\in X_p,\\
U_g&:=\Psi_{e}(1_\cA\otimes g)\qforal g\in G.
\end{align*}
Conversely, given a covariant representation $(\psi, U)$ of $X\widetilde\bowtie G$, one has 
\[
\Psi_{p}(x\otimes g) := \psi_p(x) U_g\qforal p\in P, g\in G, x\in X_p.
\]
\end{theorem}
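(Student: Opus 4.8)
The plan is to mirror the structure of the proof of Theorem \ref{T:Upsi}, but now working over the homogeneous product system $X\widetilde\bowtie G = Z$, whose fibers $Z_p$ have coefficient algebra $\fA = \cA\rtimes_\beta G$ rather than $\cA$. The key structural difference is that here the semigroup grading is unchanged (both $\Psi$ and $X\widetilde\bowtie G$ live over $P$), so the extraction of $\psi$ and $U$ happens within the single fiber $Z_e \cong \fA$ for the group part, and the fibers $Z_p$ for the correspondence part. First I would fix a representation $\Psi$ of $Z$ on $\cB$ and define $\psi_p(x) := \Psi_p(x\otimes e)$ and $U_g := \Psi_e(1_\cA\otimes g)$, exactly as stated. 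Since $Z_e \cong \fA = \cA\rtimes_\beta G$ via $a\otimes g\mapsto au_g$, the restriction $\Psi_e$ is a $*$-homomorphism of $\fA$, and so $U_g = \Psi_e(1_\cA\otimes g)$ is the image of the unitary $u_g$; multiplicativity $U_gU_h = U_{gh}$ and unitarity follow from $u_gu_h = u_{gh}$ and $\Psi_e$ being a $*$-homomorphism.

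Next I would check that $\psi = (\psi_p)$ is a Toeplitz representation of $X$. The homomorphism property $\psi_p(x)\psi_q(y) = \psi_{pq}(xy)$ comes from applying the multiplication rule of $Z$ to $(x\otimes e)(y\otimes e) = x\beta_e(y)\otimes e|_q e = xy\otimes e$ (using \condref{A3}, \condref{ZS7}), together with the fact that $\Psi$ respects multiplication. The inner-product condition $\psi_p(x)^*\psi_p(y) = \psi_e(\langle x,y\rangle)$ follows from the $Z$-inner product formula $\langle x\otimes e, y\otimes e\rangle = \beta_e(\langle x,y\rangle)u_e = \langle x,y\rangle\otimes e$, which under the identification $Z_e\cong\fA$ lands in the copy of $\cA$. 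The covariance relation \eqref{E:Upsi} is the heart of the forward direction: I would compute $U_g\psi_p(x) = \Psi_e(1_\cA\otimes g)\Psi_p(x\otimes e)$, and using that in the homogeneous case $g\cdot p = p$, the multiplication $Z_e\times Z_p\to Z_p$ gives $(1_\cA\otimes g)(x\otimes e) = \beta_g(x)\otimes g|_p e = \beta_g(x)\otimes g|_p$; then factoring $x\otimes g|_p = (\beta_g(x)\otimes e)(1_\cA\otimes g|_p)$ yields $\psi_p(\beta_g(x))U_{g|_p}$, and since $g\cdot p = p$ this is precisely $\psi_{g\cdot p}(\beta_g(x))U_{g|_p}$.

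For the converse, given a covariant pair $(\psi,U)$ I would define $\Psi_p(x\otimes g) := \psi_p(x)U_g$ and verify the three Toeplitz axioms for $Z$. The multiplicativity check $\Psi_p(x\otimes g)\Psi_q(y\otimes h) = \Psi_{pq}((x\otimes g)(y\otimes h))$ is essentially identical to the corresponding computation in Theorem \ref{T:Upsi}: one writes $\psi_p(x)U_g\psi_q(y)U_h$, pushes $U_g$ through $\psi_q(y)$ by \eqref{E:Upsi} to get $\psi_p(x)\psi_{g\cdot q}(\beta_g(y))U_{g|_q}U_h = \psi_{p(g\cdot q)}(x\beta_g(y))U_{g|_q h}$, and then invokes $g\cdot q = q$ to match the $Z$-multiplication output $x\beta_g(y)\otimes g|_q h$. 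The inner-product axiom reproduces the computation already carried out in the proof of Theorem \ref{T:Upsi} for $Y$, again using $g\cdot p = p$. Finally I would note that $\Psi_e$ must be checked to be a genuine $*$-homomorphism of $\fA = \cA\rtimes_\beta G$, i.e. that it respects the covariance relation $u_ga = \beta_g(a)u_g$; this reduces to $U_g\psi_e(a) = \psi_e(\beta_g(a))U_g$, which is exactly \eqref{E:Upsi} at $p = e$ (using $g|_e = g$ from \condref{ZS4}). The main obstacle I anticipate is bookkeeping the identification $Z_e\cong\fA$ carefully enough that $\Psi_e$ is seen to be a representation of the full crossed product and not merely of $\cA$ plus a unitary group: the universal property of $\fA$ guarantees that a covariant pair $(\psi_e, U)$ on $\cA$ assembles into a $*$-homomorphism of $\cA\rtimes_\beta G$, and one must confirm the defined $\Psi_e$ coincides with this integrated form, after which mutual inverseness of the two assignments $\Psi\mapsto(\psi,U)$ and $(\psi,U)\mapsto\Psi$ is routine.
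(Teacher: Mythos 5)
Your proposal is correct and follows essentially the same route as the paper's proof: the same extraction formulas $\psi_p(x)=\Psi_p(x\otimes e)$, $U_g=\Psi_e(1_\cA\otimes g)$, the same reassembly $\Psi_p(x\otimes g)=\psi_p(x)U_g$, the same use of homogeneity ($g\cdot p=p$) together with (ZS4), (ZS7), and the same generator-level verifications of the Toeplitz axioms and of the covariance relation. Your closing observation --- that $\Psi_e$ must be identified, via the universal property of $\fA=\cA\rtimes_\beta G$, with the integrated form of the covariant pair $(\psi_e,U)$ so that it is a genuine $*$-homomorphism on all of $Z_e\cong\fA$ --- is a point the paper's proof passes over silently, so your treatment is if anything slightly more careful.
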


\begin{proof} 
For simplicity, set $Z:=X\widetilde\bowtie G$.
Let $\Psi$ be a representation of $Z$ on $\cB$. For all $p,q\in P$, $g,h\in G$, and $x\in X_p$, $y\in X_q$, 
\[\Psi_p(x\otimes g)\Psi_q(y\otimes h)=\Psi_{pq}(x\beta_g(y)\otimes g|_q h).\]
 For $p\in P$, define $\psi_p:X_p\to \cB$ by
 \[\psi_p(x):=\Psi_p(x\otimes e)\ \text{for all }x\in X_p.\]
Here, for $p=e$, we treat $x\otimes e=xu_{e}\in \cA\rtimes_\beta G\cong Z_{e}$.
 Then,
 \begin{align*}
     \psi_p(x)^* \psi_p(y) &= \Psi_p(x\otimes e)^* \Psi_p(y\otimes e) \\
     &= \Psi_{e}(\langle x\otimes e, y\otimes e\rangle) \\
     &= \Psi_{e}(\langle x,y\rangle u_{e})\\
     &=\psi_{e}(\langle x,y\rangle) \\
     \psi_p(x)\psi_q(y) &= \Psi_p(x\otimes e) \Psi_q(y\otimes e) \\\
     &= \Psi_{pq}(x\beta_{e}(y)\otimes e|_q e) \\
     &= \Psi_{pq}(xy\otimes e) \\
     &=\psi_{pq}(xy).
 \end{align*}
 Therefore, $\psi$ is a representation of $X$ on $\cB$.
 
For each $g\in G$, set $U_g:=\Psi_{e}(1_\cA\otimes g)$. As before, one can easily check that $U$ is a unitary representation of $G$ on $\cB$. 
 
 For any $g\in G$, $p\in P$ and $x\in X_p$,
 \begin{align*}
     U_g \psi_p(x) &= \Psi_{e}(1_\cA\otimes g) \Psi_p(x\otimes e) \\
     &= \Psi_p(\beta_g(x)\otimes g|_p) \\
     &= \Psi_p((\beta_g(x)\otimes e)(1_\cA\otimes g|_p)) \\
     &= \Psi_p(\beta_g(x)\otimes e)\Psi_{e}(1_\cA\otimes g|_p) \\
     &= \psi_p(\beta_g(x)) U_{g|_p}.
 \end{align*}
 
 Conversely, given a representation $\psi$ of $X$ and a unitary representation $U$ of $G$ that satisfy $U_g\psi_p(x)=\psi_p(\beta_g(x)) U_{g|_p}$, define
 \[
 \Psi_p(x\otimes g):=\psi_p(x) U_g\qforal g\in G, p\in P, x\in X_p.
 \]
 For any $x,y\in X_p$ and $g,h\in G$, 
 \begin{align*}
 \Psi_p(x\otimes g)^* \Psi_p(y\otimes h) &= U_g^* \psi_p^*(x) \psi_p(y) U_h \\
 &= U_g^* \psi_{e}(\langle x,y\rangle) U_h \\
 &= \psi_{e}(\beta_{g^{-1}}(\langle x,y\rangle)) U_{g^{-1}h} \\
 &= \Psi_{e}(\beta_{g^{-1}}(\langle x,y\rangle)\otimes g^{-1} h) \\
 &= \Psi_{e}(\langle x\otimes g, y\otimes h\rangle).
 \end{align*}
 For any $x\in X_p$, $y\in X_q$, and $g,h\in G$,
 \begin{align*}
      \Psi_p(x\otimes g) \Psi_q(y\otimes h) &= \psi_p(x) U_g \psi_q(y) U_h \\
      &= \psi_p(x) \psi_q(\beta_g(y)) U_{g|_q} U_h \\
      &= \psi_{pq}(x\beta_g(y)) U_{g|_q h} \\
      &= \Psi_{pq}(x\beta_g(y)\otimes g|_q h) \\
      &= \Psi_{pq}((x\otimes g)(y\otimes h)). 
 \end{align*}
 Therefore, $\Psi$ is a representation of $Z$. 
\end{proof}

As an immediate corollary of Theorems \ref{T:Upsi} and \ref{T:Upsi.homo}, the universal $C^*$-algebras of representations of $X\bowtie G$ 
and covariant representations of $(X,G,\beta)$ must coincide, and similar for $X\widetilde\bowtie G$ if $(X,G,\beta)$ is also homogeneous. 
Therefore, we have the following Toeplitz type Hao-Ng isomorphism theorem. 
   
\begin{corollary} 
\label{C:HaoNgT}
Let $(X,G,\beta)$ be a \ZS system. Then 
\begin{itemize}
\item[(i)] $\cT_{X\bowtie G} \cong \cT_X \bowtie G$; and 

\item[(ii)] 
$\cT_{X\bowtie G} \cong \cT_X \bowtie G \cong \cT_{X\widetilde\bowtie G}$ provided that $(X,G,\beta)$ is homogeneous. 
\end{itemize}
\end{corollary}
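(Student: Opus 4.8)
The plan is to derive all the isomorphisms formally from the bijective correspondences already established in Theorems \ref{T:Upsi} and \ref{T:Upsi.homo}, viewing each of the three C*-algebras as a universal object for its own class of representations and transporting universal representations back and forth. The substantive work has been done in those two theorems; the corollary is a soft consequence of universality, so I do not expect a genuine obstacle here.

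For part (i), I would recall that $\cT_{X\bowtie G}$ carries a universal Toeplitz representation $i_{X\bowtie G}$ of the product system $X\bowtie G$, while by Definition \ref{D:TXG} the algebra $\cT_X\bowtie G$ carries a universal covariant representation $(\iota_\psi,\iota_U)$ of $(X,G,\beta)$. Applying $\Pi^{-1}$ from Theorem \ref{T:Upsi} to $(\iota_\psi,\iota_U)$ produces a representation $\Theta$ of $X\bowtie G$ inside $\cT_X\bowtie G$, so the universal property of $\cT_{X\bowtie G}$ induces a $*$-homomorphism $\Phi\colon \cT_{X\bowtie G}\to \cT_X\bowtie G$ with $\Phi\circ i_{X\bowtie G}=\Theta$. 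In the reverse direction, applying $\Pi$ to $i_{X\bowtie G}$ yields a covariant representation $(\psi^\sharp,U^\sharp)$ of $(X,G,\beta)$ inside $\cT_{X\bowtie G}$, and the universal property of $\cT_X\bowtie G$ gives a $*$-homomorphism $\Xi\colon \cT_X\bowtie G\to \cT_{X\bowtie G}$ with $\Xi\circ\iota_\psi=\psi^\sharp$ and $\Xi\circ\iota_U=U^\sharp$.

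To finish I would verify that $\Phi$ and $\Xi$ are mutually inverse by checking them on generators. The crucial bookkeeping is that under $\Pi$ the generated C*-subalgebras match: since $\Psi_{(p,g)}(x\otimes g)=\psi_p(x)U_g$ while $\psi_p(x)=\Psi_{(p,e)}(x\otimes e)$ and $U_g=\Psi_{(e,g)}(1_\cA\otimes g)$, one has $C^*(\Psi(X\bowtie G))=C^*(\psi(X)\cup U(G))$. Concretely, on a generator $i_{X\bowtie G}(x\otimes g)$ with $x\in X_p$ one computes $\Xi\bigl(\Phi(i_{X\bowtie G}(x\otimes g))\bigr)=\Xi(\iota_\psi(x)\iota_U(g))=\psi^\sharp_p(x)U^\sharp_g=i_{X\bowtie G}(x\otimes g)$, using \eqref{E:DefPsi} and the multiplication \eqref{E:M} in $X\bowtie G$; the symmetric computation shows $\Phi\circ\Xi$ fixes the generators of $\cT_X\bowtie G$. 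Hence both compositions are the identity and $\cT_{X\bowtie G}\cong\cT_X\bowtie G$.

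Part (ii) follows by running the identical argument with the correspondence $\widetilde\Pi$ of Theorem \ref{T:Upsi.homo} in place of $\Pi$, which yields $\cT_{X\widetilde\bowtie G}\cong \cT_X\bowtie G$ whenever $(X,G,\beta)$ is homogeneous; chaining this with part (i) gives $\cT_{X\bowtie G}\cong\cT_X\bowtie G\cong\cT_{X\widetilde\bowtie G}$. The only point needing care throughout is the verification that the bijections respect the C*-algebras generated, so that agreement on generators is enough to conclude the two compositions are identity maps.
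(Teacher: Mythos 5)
Your proposal is correct and follows exactly the paper's route: the paper also deduces the corollary directly from the bijective correspondences of Theorems \ref{T:Upsi} and \ref{T:Upsi.homo}, treating the identification of the universal C*-algebras as immediate, while you simply spell out the standard universality bookkeeping (transporting the universal representations through $\Pi$ and $\widetilde\Pi$, and checking the induced maps are mutually inverse on generators) that the paper leaves implicit.
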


\subsection{Cuntz-Pimsner type representations of $(X,G,\beta)$ and the C*-algebra $\cO_X\bowtie G$}

In this subsection, we prove that the one-to-one correspondence $\Pi$ in Theorem \ref{T:Upsi} preserves the Cuntz-Pimsner covariance: $\Psi$ is Cuntz-Pimsner covariant
if and only if so is $\psi$. Accordingly we obtain the Hao-Ng isomorphism theorem as well in this case. But unfortunately, it is unknown whether 
the correspondence $\widetilde\Pi$ in Theorem \ref{T:Upsi.homo} preserves the Cuntz-Pimsner covariance.

\begin{proposition}\label{prop.cp1}
Let $(X,G,\beta)$ be a \ZS system and set $Y:=X\bowtie G$. 
For each $p\in P$ and $g\in G$, define a map $\iota_{p,g}:\cL(X_p)\to\cL(Y_{(p,g)})$ by
\[\iota_{p,g}(T)(x\otimes g)=(Tx)\otimes g.\]
Then $\iota_{p,g}$ is an isometric $*$-isomorphism. Moreover, for each rank-one operator $\theta_{x,y}\in \cK(X_p)$, $\iota_{p,g}(\theta_{x,y})$ is the rank one operator $\Theta_{x\otimes g, y\otimes g}\in\cK(Y_{(p,g)})$, and thus $\iota_{p,g}(\cK(X_p))=\cK(Y_{(p,g)})$.
\end{proposition}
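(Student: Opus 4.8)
The plan is to regard the assignment $x\mapsto x\otimes g$ as a bijective linear identification $V\colon X_p\to Y_{(p,g)}$ and to transport adjointable operators across it, keeping in mind that $V$ is \emph{not} $\cA$-linear but only intertwines the module structures through $\beta_g$ and intertwines the two inner products up to the automorphism $\beta_{g^{-1}}$. First I would check that $\iota_{p,g}$ is a well-defined $*$-homomorphism. Since $x\mapsto x\otimes g$ is an isometric linear bijection of the underlying Banach spaces (as recorded in the construction of $Y_{(p,g)}$), the formula $\iota_{p,g}(T)=V T V^{-1}$ defines a bounded $\bC$-linear map on $Y_{(p,g)}$, and $\iota_{p,g}$ is visibly linear and multiplicative. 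The one computation worth recording is adjointability: for $x,y\in X_p$,
\[
\langle \iota_{p,g}(T)(x\otimes g),\, y\otimes g\rangle=\beta_{g^{-1}}(\langle Tx,y\rangle)=\beta_{g^{-1}}(\langle x,T^*y\rangle)=\langle x\otimes g,\, \iota_{p,g}(T^*)(y\otimes g)\rangle,
\]
so $\iota_{p,g}(T)^*=\iota_{p,g}(T^*)$ and $\iota_{p,g}$ is a $*$-homomorphism.

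Next I would establish bijectivity. Injectivity is immediate, since $\iota_{p,g}(T)=0$ forces $(Tx)\otimes g=0$, hence $Tx=0$, for every $x$. For surjectivity, given $S\in\cL(Y_{(p,g)})$ put $T:=V^{-1}SV$; using the identity $\langle u,w\rangle=\beta_g(\langle Vu,Vw\rangle)$ together with the adjoint $S^*$, one verifies that $T$ is adjointable on $X_p$ with $T^*=V^{-1}S^*V$, and by construction $\iota_{p,g}(T)=S$. Thus $\iota_{p,g}$ is a bijective $*$-homomorphism between the C*-algebras $\cL(X_p)$ and $\cL(Y_{(p,g)})$, and is therefore an isometric $*$-isomorphism.

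Finally, for the statement about compacts, I would compute directly on a rank-one operator. Using the right action $(x\otimes g)\cdot a=x\beta_g(a)\otimes g$, the inner product $\langle y\otimes g,z\otimes g\rangle=\beta_{g^{-1}}(\langle y,z\rangle)$, and $\beta_g\circ\beta_{g^{-1}}=\id$ coming from (A2) and (A3), one gets for each $z\in X_p$
\[
\iota_{p,g}(\theta_{x,y})(z\otimes g)=\big(x\langle y,z\rangle\big)\otimes g=(x\otimes g)\cdot\beta_{g^{-1}}(\langle y,z\rangle)=(x\otimes g)\langle y\otimes g,z\otimes g\rangle=\Theta_{x\otimes g,y\otimes g}(z\otimes g),
\]
so $\iota_{p,g}(\theta_{x,y})=\Theta_{x\otimes g,y\otimes g}$. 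Since $\cK(X_p)=\overline{\lspan}\{\theta_{x,y}\}$, every vector of $Y_{(p,g)}$ has the form $x\otimes g$, and $\iota_{p,g}$ is an isometric $*$-homomorphism, passing to closed linear spans yields $\iota_{p,g}(\cK(X_p))=\overline{\lspan}\{\Theta_{x\otimes g,y\otimes g}\}=\cK(Y_{(p,g)})$.

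I expect the only genuinely delicate point to be the surjectivity step: because $V$ is not $\cA$-linear, the verification that $V^{-1}SV$ is adjointable must exploit the $\beta_{g^{-1}}$-twist in the inner product rather than ordinary unitarity of $V$. Everything else is a routine transfer of structure across the identification $x\leftrightarrow x\otimes g$.
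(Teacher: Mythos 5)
Your proposal is correct and follows essentially the same route as the paper: the same $\beta_{g^{-1}}$-twisted adjointability computation, the same surjectivity argument (defining $T$ by $\widetilde T(x\otimes g)=Tx\otimes g$, i.e.\ $T=V^{-1}SV$), and the same rank-one computation $\iota_{p,g}(\theta_{x,y})=\Theta_{x\otimes g,y\otimes g}$. The only cosmetic difference is that you deduce isometry from bijectivity of a $*$-homomorphism between C*-algebras, whereas the paper verifies it directly using that $\beta_{g^{-1}}$ is isometric on $\cA$; both are standard and equally valid.
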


\begin{proof} For any $T\in\cL(X_p)$ and $x\in X_p$, $g\in G$,
\begin{align*}
    \|\iota_{p,g}(T)(x\otimes g)\|^2 &= \|\langle (Tx)\otimes g, (Tx)\otimes g\rangle\| \\
    &= \|\beta_{g^{-1}}(\langle Tx, Tx\rangle\| \\
    &= \|\langle Tx, Tx\rangle\|=\|Tx\|^2.
\end{align*}
Thus $\iota_{p,g}$ is isometric. It is clear that $\iota_{p,g}$ is a homomorphism. Moreover, for any $x,y\in X_p$,
\begin{align*}
    \langle \iota_{p,g}(T^*)x\otimes g, y\otimes g\rangle &= \langle (T^*x)\otimes g, y\otimes g\rangle \\
    &= \beta_{g^{-1}}(\langle T^* x,y\rangle)= \beta_{g^{-1}}(\langle x, Ty\rangle) \\
    &= \langle x\otimes g, (Ty)\otimes g\rangle = \langle x\otimes g, \iota_{p,g}(T)y\otimes g\rangle.
\end{align*}
Hence $\iota_{p,g}(T^*)=\iota_{p,g}(T)^*$. Finally, for any $\widetilde{T}\in\cL(Y_{(p,g)})$, take any $x\in X_p$ and define $Tx\in X_p$ such that $\widetilde{T}(x\otimes g)=Tx\otimes g$. One can check that $T$ is a $\cA$-linear, adjointable operator in $\cL(X_p)$, and that $\widetilde{T}=\iota_{p,g}(T)$. Therefore, $\iota_{p,g}$ is an isometric $*$-isomorphism.

Now fix a rank one operator $\theta_{x,y}\in\cK(X_p)$. In other words, for any $z\in X_p$, $\theta_{x,y}(z)=x\langle y,z\rangle$. Now for any $g\in G$ and $z\in X_p$, we have
\begin{align*}
\iota_{p,g}(\theta_{x,y})(z\otimes g) 
    &= \theta_{x,y}(z)\otimes g 
      = (x\langle y,z\rangle)\otimes g \\
    &= (x\otimes g)(\beta_{g^{-1}}(\langle y,z\rangle)\otimes e) \\
    &= (x\otimes g)\langle y\otimes g, z\otimes g\rangle \\
    &= \Theta_{x\otimes g, y\otimes g}(z\otimes g). 
\end{align*}
This proves $\iota_{p,g}(\theta_{x,y})=\Theta_{x\otimes g, y\otimes g}$, and therefore $\iota_{p,g}(\cK(X_p))=\cK(Y_{(p,g)})$
\end{proof}

Let $\phi_p$ and $\Phi_{(p,q)}$ be the left action of $\cA$ on $X_p$ and $Y_{(p,g)}$, respectively: 
\[
\phi_p(a)x=ax\text{ and } \Phi_{(p,g)}(a)(x\otimes g)=ax\otimes g.
\]

\begin{theorem}\label{thm.cp} 
Let $\Psi$ be a representation of $X\bowtie G$ and $(\psi, U)$ be the covariant representation of $(X,G,\beta)$ under the one-to-one correspondence $\Pi$ given in Theorem \ref{T:Upsi}. Then $\Psi$ is Cuntz-Pimsner covariant if and only if so is $\psi$. 
\end{theorem}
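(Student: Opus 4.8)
The plan is to leverage the isometric $*$-isomorphism $\iota_{p,g}$ from Proposition \ref{prop.cp1} to translate the Cuntz-Pimsner covariance condition for $\Psi$ on each fibre $Y_{(p,g)}$ directly into the Cuntz-Pimsner covariance condition for $\psi$ on $X_p$. Two compatibilities are established first. On the one hand, writing $\phi_p$ and $\Phi_{(p,g)}$ for the left actions of $\cA$ on $X_p$ and $Y_{(p,g)}$, a one-line computation gives $\Phi_{(p,g)}(a)(x\otimes g)=ax\otimes g=\iota_{p,g}(\phi_p(a))(x\otimes g)$, so that
\[
\Phi_{(p,g)}(a)=\iota_{p,g}(\phi_p(a))\qforal a\in\cA.
\]
On the other hand, using $\iota_{p,g}(\theta_{x,y})=\Theta_{x\otimes g,\,y\otimes g}$ together with the defining formula $\Psi_{(p,g)}(x\otimes g)=\psi_p(x)U_g$ from \eqref{E:DefPsi} and the unitarity of $U_g$, one computes
\[
\Psi^{(p,g)}(\Theta_{x\otimes g,\,y\otimes g})=\psi_p(x)U_g U_g^*\psi_p(y)^*=\psi_p(x)\psi_p(y)^*=\psi^{(p)}(\theta_{x,y}).
\]
Since the rank-one operators generate $\cK(X_p)$ and both sides are continuous homomorphisms, this yields the identity $\Psi^{(p,g)}\circ\iota_{p,g}=\psi^{(p)}$ on all of $\cK(X_p)$.

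With these two identities in hand, the equivalence becomes a translation. Fix $p\in P$ and $a\in\cA$. By the first identity, together with the fact that $\iota_{p,g}$ restricts to a bijection $\cK(X_p)\to\cK(Y_{(p,g)})$, the membership $\Phi_{(p,g)}(a)\in\cK(Y_{(p,g)})$ holds if and only if $\phi_p(a)\in\cK(X_p)$. When these equivalent conditions hold, the second identity gives
\[
\Psi^{(p,g)}(\Phi_{(p,g)}(a))=\Psi^{(p,g)}(\iota_{p,g}(\phi_p(a)))=\psi^{(p)}(\phi_p(a)),
\]
while $\Psi_{(e,e)}(a)=\psi_e(a)$ under the identification $Y_{(e,e)}\cong\cA$. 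Thus the Cuntz-Pimsner covariance relation for $\Psi$ at the fibre $(p,g)$, namely $\Psi^{(p,g)}(\Phi_{(p,g)}(a))=\Psi_{(e,e)}(a)$, is the same equation as the Cuntz-Pimsner covariance relation for $\psi$ at $p$, namely $\psi^{(p)}(\phi_p(a))=\psi_e(a)$.

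Finally I would assemble the two directions. Every element of $P\bowtie G$ has the form $(p,g)$, and the displayed translation shows that the covariance condition for $\Psi$ at $(p,g)$ is independent of $g$ and coincides with the covariance condition for $\psi$ at $p$. Hence if $\psi$ is Cuntz-Pimsner covariant, then the condition holds at every $(p,g)$ and so $\Psi$ is Cuntz-Pimsner covariant; conversely, if $\Psi$ is Cuntz-Pimsner covariant, then applying the condition at $(p,e)$ for each $p\in P$ recovers Cuntz-Pimsner covariance of $\psi$. I do not anticipate a genuine obstacle: essentially all of the analytic content was absorbed into Proposition \ref{prop.cp1}, which guarantees that $\iota_{p,g}$ is an isometric $*$-isomorphism carrying $\cK(X_p)$ onto $\cK(Y_{(p,g)})$. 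The only point requiring a moment's care is verifying that $\Psi^{(p,g)}\circ\iota_{p,g}$ and $\psi^{(p)}$ agree on all of $\cK(X_p)$ rather than merely on rank-one operators, which follows by continuity and the density of the span of rank-one operators.
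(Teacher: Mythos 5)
Your proof is correct, and it pivots on the same tool as the paper's argument: the isometric $*$-isomorphism $\iota_{p,g}$ of Proposition \ref{prop.cp1}. The difference is in execution, and yours is tighter. The paper's converse direction reduces ``without loss of generality'' to the case where $\Phi_{(p,g)}(a)$ is a single rank-one operator $\Theta_{x\otimes g,\,y\otimes g}$, a reduction it does not justify: a general element of $\cK(Y_{(p,g)})$ is only a norm limit of finite sums of rank-one operators, so one needs precisely the linearity-plus-continuity argument you make explicit. Your two identities, $\Phi_{(p,g)}=\iota_{p,g}\circ\phi_p$ and $\Psi^{(p,g)}\circ\iota_{p,g}=\psi^{(p)}$ on all of $\cK(X_p)$, absorb that step cleanly; the second identity is in fact proved in the paper, but only later, at the start of the proof of Theorem \ref{T:Ncov}, where it is needed for Nica covariance. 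A further small gain in your write-up is symmetry: the translation shows the covariance condition for $\Psi$ at the fibre $(p,g)$ is independent of $g$ and coincides with the condition for $\psi$ at $p$, so both implications drop out at once, whereas the paper argues the forward direction separately using only the fibres $(p,e)$ (and there, too, tacitly uses $\Psi^{(p,e)}\circ\iota_{p,e}=\psi^{(p)}$). In short: same strategy, but your version fills in the continuity/density step the paper glosses over.
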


\begin{proof} 
Suppose yhat $\Psi$ is a Cuntz-Pimsner covariant representation of $Y$. Then by definition, for any $(p,g)\in P\bowtie G$, 
\[\Psi^{(p,g)}(\Phi_{(p,g)}(a))=\Psi_{(e,e)}(a)\ \text{ for all }a\in\Phi_{(p,g)}^{-1}(\cK(Y_{(p,g)})).\]
Notice that, for any $p\in P$ and $a\in \A$ with $\phi_p(a)\in \cK(X_p)$, we have $\Phi_{(p,e)}(a)\in \cK(Y_{(p,e)})$. Thus
\[\psi^{(p)}(\phi(a))=\Psi^{(p,e)}(\Phi(a))=\Psi_{(e,e)}(a)=\psi_{e}(a).\]
Therefore, $\psi$ is Cuntz-Pimser covariant. 

Conversely, suppose that $\psi$ is Cuntz-Pimsner covariant. Take $a\in \Phi^{-1}(\cK(Y_{(p,g)})$. Without loss of generality, we assume that 
\[\Phi_{(p,g)}(a)=\Theta_{x\otimes g, y\otimes g}\ \text{for }x,y\in X_p.\]
By Proposition \ref{prop.cp1}, \[\Phi_{(p,g)}(a)=\Theta_{x\otimes g, y\otimes g}=\iota_{p,g}(\theta_{x,y})=\iota_{p,g}(\phi_p(a)).\]
We have that $\theta_{x,y}=\phi_p(a)$, and so 
\begin{align*}
    \Psi^{(p,g)}(\Phi_{(p,g)}(a)) &= \Psi^{(p,g)}(\Theta_{x\otimes g, y\otimes g}) \\
    &= \Psi_{(p,g)}(x\otimes g)\Psi_{(p,g)}(y\otimes g)^* \\
    &= \psi_p(x) U_g (\psi_p(y) U_g)^* = \psi_p(x) U_g U_g^* \psi_p(y)^* \\
    &= \psi_p(x) \psi_p(y)^* = \psi^{(p)}(\theta_{x,y}) \\
    &= \psi^{(p)}(\phi_p(a)) 
      = \psi_{e}(a)=\Psi_{(e,e)}(a).
\end{align*}
Therefore, $\Psi$ is Cuntz-Pimsner covariant. 
\end{proof}

\begin{definition} 
\label{D:OXG}
Let $\cO_X\bowtie G$ be the universal C*-algebra generated by the set of all covariant representations $(\psi,U)$ of $(X,G,\beta)$ with $\psi$ Cuntz-Pimsner covariant. 
\end{definition}

As a corollary of Theorem \ref{thm.cp}, the universal C*-algebra of Cuntz-Pimsner covariant representations of $X\bowtie G$ and the universal C*-algebra of covariant representations $(\psi,U)$ 
with $\psi$ Cuntz-Pimsner covariant of $(X,G,\beta)$ must coincide. Therefore, we have the following Cuntz-Pimsner type Hao-Ng isomorphism theorem. 

\begin{corollary} 
\label{C:HaoNgC}
$\cO_{X\bowtie G} \cong \cO_X\bowtie G$.
\end{corollary}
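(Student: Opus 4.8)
The plan is to deduce the isomorphism purely formally from the two facts already in hand: the bijection $\Pi$ of Theorem~\ref{T:Upsi} and its preservation of Cuntz--Pimsner covariance in Theorem~\ref{thm.cp}. Write $Y:=X\bowtie G$. Let $j:=j_{Y}$ be the universal Cuntz--Pimsner covariant representation generating $\cO_{X\bowtie G}=\cO_Y$, and let $(\psi^u,U^u)$ be the universal covariant representation with $\psi^u$ Cuntz--Pimsner covariant that generates $\cO_X\bowtie G$ (Definition~\ref{D:OXG}). Since $\Pi$ is a bijection carrying the class of Cuntz--Pimsner covariant representations of $Y$ onto the class of covariant pairs $(\psi,U)$ with $\psi$ Cuntz--Pimsner covariant, these two universal objects are built from representation classes that correspond bijectively; I would turn this correspondence into an explicit pair of mutually inverse $*$-homomorphisms.

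First I would produce $\sigma:\cO_{X\bowtie G}\to\cO_X\bowtie G$. Applying $\Pi^{-1}$ to $(\psi^u,U^u)$ yields a representation $\Psi^u$ of $Y$ sitting inside $\cO_X\bowtie G$, given on generators by $\Psi^u_{(p,g)}(x\otimes g)=\psi^u_p(x)U^u_g$; by Theorem~\ref{thm.cp} it is Cuntz--Pimsner covariant. The universal property of $\cO_{X\bowtie G}$ then supplies a $*$-homomorphism $\sigma$ with $\sigma\circ j=\Psi^u$. Symmetrically, applying $\Pi$ to $j$ gives a covariant pair $(\psi^j,U^j)$ inside $\cO_{X\bowtie G}$ with $\psi^j_p(x)=j_{(p,e)}(x\otimes e)$ and $U^j_g=j_{(e,g)}(1_\cA\otimes g)$, and $\psi^j$ is Cuntz--Pimsner covariant again by Theorem~\ref{thm.cp}; the universal property of $\cO_X\bowtie G$ yields a $*$-homomorphism $\rho$ with $\rho\circ\psi^u=\psi^j$ and $\rho\circ U^u=U^j$. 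Before proceeding I would record that each family of generators is recovered from the other---$j_{(p,g)}(x\otimes g)=j_{(p,e)}(x\otimes e)\,j_{(e,g)}(1_\cA\otimes g)$ on one side, and $\psi^u_p(x)=\Psi^u_{(p,e)}(x\otimes e)$, $U^u_g=\Psi^u_{(e,g)}(1_\cA\otimes g)$ on the other---so that $C^*(\psi^j(X)\cup U^j(G))=C^*(j(Y))$ and likewise for $\Psi^u$; this guarantees the images generate and that checking identities on generators suffices.

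Finally I would verify $\rho\circ\sigma=\id$ and $\sigma\circ\rho=\id$ on generators, which is exactly the content of the round-trip identities establishing that $\Pi$ is a bijection in Theorem~\ref{T:Upsi}. Indeed $\rho(\sigma(j_{(p,g)}(x\otimes g)))=\rho(\psi^u_p(x)U^u_g)=\psi^j_p(x)U^j_g=j_{(p,e)}(x\otimes e)\,j_{(e,g)}(1_\cA\otimes g)=j_{(p,g)}(x\otimes g)$, using that $j$ is a representation together with $(p,e)(e,g)=(p,g)$ and $(x\otimes e)(1_\cA\otimes g)=x\otimes g$; and $\sigma(\rho(\psi^u_p(x)))=\Psi^u_{(p,e)}(x\otimes e)=\psi^u_p(x)U^u_e=\psi^u_p(x)$ while $\sigma(\rho(U^u_g))=\Psi^u_{(e,g)}(1_\cA\otimes g)=\psi^u_e(1_\cA)U^u_g=U^u_g$, the last two steps using the unital conventions $U^u_e=1$ and $\psi^u_e(1_\cA)U^u_g=U^u_g$ inherited from the bijectivity of $\Pi$. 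Since each composition fixes a generating set, both are the identity, so $\sigma$ and $\rho$ are mutually inverse and $\cO_{X\bowtie G}\cong\cO_X\bowtie G$. I expect the only genuinely delicate point to be bookkeeping around these unital conventions (ensuring $U^u_e$ and $\psi^u_e(1_\cA)$ act as the appropriate units); all the real content has already been absorbed into Theorems~\ref{T:Upsi} and~\ref{thm.cp}, so the remaining argument is a formal universal-property manipulation.
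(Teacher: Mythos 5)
Your proof is correct and follows essentially the same route as the paper: the paper simply observes that, by Theorem~\ref{thm.cp}, the bijection $\Pi$ of Theorem~\ref{T:Upsi} matches Cuntz--Pimsner covariant representations of $X\bowtie G$ with covariant pairs $(\psi,U)$ having $\psi$ Cuntz--Pimsner covariant, so the two universal C*-algebras coincide. You have merely made explicit the universal-property bookkeeping (the maps $\sigma$, $\rho$ and the generator round-trip) that the paper leaves as immediate.
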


%\begin{proof} For any representation $(\pi, U)$ of $\cO_X\bowtie G$, 
%%$\psi=\pi\circ j_X$ is a Cuntz-Pimsner covariant representation of $X$, and thus 
%by Theorem \ref{thm.cp} there exists a Cuntz-Pimsner covariant representation $\Psi$ of $X\bowtie G$ such that $\Psi(x\otimes g)=\psi(x)U_g$ for all $x\in X$ and $g\in G$. 
%
%Conversely, every Cuntz-Pimsner covariant representation $\Psi$ of $X\bowtie G$ corresponds to a Cuntz-Pimsner covariant representation $\psi$ of $X$ and a unitary representation $U$ of $G$ such that $U_g \psi(x)=\psi(\beta_g(x)) U_{g|_p}$ for all $p\in P$ and $x\in X_p$. By the universality of $j_X$, there exists a $*$-homomorphism $\psi_*$ of $\cO_X$ such that $\psi=\psi_* \circ j_X$. The pair $(\psi_*, U)$ is a representation for $\cO_X\bowtie G$. 
%
%By the universality of both C*-algebras, we have that $\cO_{X\bowtie G}\cong \cO_X\bowtie G$.
%\end{proof}

\begin{remark} 
One might notice that the above corollary has no corresponding part to Corollary \ref{C:HaoNgT} (ii) for the homogeneous case. 
In fact, we do not know whether $\cO_{X\widetilde\bowtie G}\cong \cO_X\bowtie G$, although $\cO_{X\widetilde\bowtie G}$ is generally a quotient of $\cO_{X\bowtie G}$ (see Corollary \ref{C:2bowtie} below).
Even in the special situation of a homogeneous \ZS action of $G$ on $X$ where $g|_p=g$ for all $g\in G$ and $p\in P$,
the \ZS homogeneous product system $X\widetilde\bowtie G$ becomes a crossed product $X\rtimes G$. In such a case, the problem whether  $\cO_{X\rtimes G}\cong \cO_X\rtimes G$ is known as the Hao-Ng isomorphism problem in the literature. The isomorphism is known in several special cases (see, for example, \cite{HaoNg2008} and more recent approaches from non-self-adjoint operator algebras \cite{DOK20, K2017, KR19}).
\end{remark}

\begin{corollary}
\label{C:2bowtie}
If a \ZS system $(X,G,\beta)$ is homogeneous, then there is a natural epimorphism from $\cO_{X\bowtie G}$ to  $\cO_{X\widetilde\bowtie G}$.
\end{corollary}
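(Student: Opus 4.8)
The plan is to realize a suitable Cuntz-Pimsner covariant representation of $X\bowtie G$ inside $\cO_{X\widetilde\bowtie G}$ and then invoke universality, exactly in the spirit of Corollary \ref{C:HaoNgC}, but using only the \emph{one} implication of the Cuntz-Pimsner correspondence for $X\widetilde\bowtie G$ that is actually available. Concretely, let $\Psi$ denote the universal Cuntz-Pimsner covariant representation of $Z:=X\widetilde\bowtie G$ on $\cO_{X\widetilde\bowtie G}$. By Theorem \ref{T:Upsi.homo}, $\Psi$ corresponds to a covariant representation $(\psi,U)$ of $(X,G,\beta)$ with $\psi_p(x)=\Psi_p(x\otimes e)$ and $U_g=\Psi_e(1_\cA\otimes g)$. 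The crux is to show that this $\psi$ is Cuntz-Pimsner covariant as a representation of $X$; once this is done, Definition \ref{D:OXG} together with Theorem \ref{thm.cp} and Corollary \ref{C:HaoNgC} will produce a $*$-homomorphism $\rho:\cO_{X\bowtie G}\cong\cO_X\bowtie G\to\cO_{X\widetilde\bowtie G}$ sending the generator indexed by $(p,g)$ to $\psi_p(x)U_g$.

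To establish Cuntz-Pimsner covariance of $\psi$, I would introduce the analogue of $\iota_{p,g}$ from Proposition \ref{prop.cp1}, namely the map $\kappa_p:\cL(X_p)\to\cL(Z_p)$ defined on elementary tensors by $\kappa_p(S)(z\otimes k)=(Sz)\otimes k$. A short computation inside $\fA=\cA\rtimes_\beta G$, identical in structure to the one in Proposition \ref{prop.cp1}, shows that $\kappa_p$ is a well-defined $*$-homomorphism with $\kappa_p(S)^\ast=\kappa_p(S^\ast)$ and $\kappa_p(\theta_{x,y})=\Theta_{x\otimes e,\,y\otimes e}$, whence $\kappa_p(\cK(X_p))\subseteq\cK(Z_p)$. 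Writing $\phi_p^Z$ for the left action of $\fA$ on $Z_p$, one checks directly from the module formulas that $\kappa_p\circ\phi_p=\phi_p^Z|_{\cA}$ (using $\beta_e=\id$), and, since $\Psi^{(p)}(\Theta_{x\otimes e,\,y\otimes e})=\Psi_p(x\otimes e)\Psi_p(y\otimes e)^\ast=\psi^{(p)}(\theta_{x,y})$, that $\Psi^{(p)}\circ\kappa_p=\psi^{(p)}$ on $\cK(X_p)$ by linearity and continuity. With these identities the covariance is immediate: for $p\in P$ and $a\in\cA$ with $\phi_p(a)\in\cK(X_p)$ we get $\phi_p^Z(a)=\kappa_p(\phi_p(a))\in\cK(Z_p)$, and because $a\in\cA\subseteq\fA=Z_e$ while $\Psi$ is Cuntz-Pimsner covariant for $Z$,
\[
\psi^{(p)}(\phi_p(a))=\Psi^{(p)}(\kappa_p(\phi_p(a)))=\Psi^{(p)}(\phi_p^Z(a))=\Psi_e(a)=\psi_e(a).
\]

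It remains to check surjectivity of $\rho$, which is automatic. Using that $\Psi$ is a representation of $Z$ together with $\beta_e=\id$ and $e|_e=e$, one computes $\psi_p(x)U_g=\Psi_p(x\otimes e)\,\Psi_e(1_\cA\otimes g)=\Psi_p\big((x\otimes e)(1_\cA\otimes g)\big)=\Psi_p(x\otimes g)$; as $(p,g)$ and $x\in X_p$ vary, these elements generate $\cO_{X\widetilde\bowtie G}$, so $\rho$ has dense and hence full range. I expect the only genuine work to be the properties of $\kappa_p$ and the resulting Cuntz-Pimsner covariance of $\psi$. The reason this argument yields merely an epimorphism rather than the isomorphism of Corollary \ref{C:HaoNgC} is precisely that the reverse implication is unavailable: $\cK(Z_p)$ contains operators of the form $\Theta_{x\otimes g,\,y\otimes h}$ with $g\neq h$ that do not lie in $\kappa_p(\cK(X_p))$, so Cuntz-Pimsner covariance of $\psi$ does not obviously force that of $\Psi$ — this is exactly the obstruction flagged in the preceding remark.
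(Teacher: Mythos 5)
Your proof is correct, and it follows the same overall route as the paper---realize a Cuntz-Pimsner covariant representation of $X\bowtie G$ on $\cO_{X\widetilde\bowtie G}$ and invoke universality---but it actually verifies the step the paper only asserts. The paper's entire proof is the claim that the composite $X\bowtie G\hookrightarrow \cT_{X\bowtie G}\cong\cT_{X\widetilde\bowtie G}\twoheadrightarrow\cO_{X\widetilde\bowtie G}$ (the middle isomorphism being Corollary \ref{C:HaoNgT}(ii)) is a Cuntz-Pimsner covariant representation of $X\bowtie G$; no justification of the covariance is offered. That composite is precisely your representation $x\otimes g\mapsto\psi_p(x)U_g=\Psi_p(x\otimes g)$, since the Toeplitz-level isomorphism matches the universal Toeplitz representations and the quotient map carries the universal Toeplitz representation of $Z=X\widetilde\bowtie G$ to its universal Cuntz-Pimsner covariant representation $\Psi$. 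So the covariance the paper takes for granted is exactly what you prove via $\kappa_p$, through the identities $\kappa_p(\theta_{x,y})=\Theta_{x\otimes e,\,y\otimes e}$, $\kappa_p\circ\phi_p=\phi_p^Z|_{\cA}$ and $\Psi^{(p)}\circ\kappa_p=\psi^{(p)}$, combined with Theorem \ref{thm.cp}; in this sense your write-up is more complete than the paper's. One point of care: showing $\kappa_p(S)$ is bounded and adjointable on all of $Z_p^0$ is not literally ``identical in structure'' to Proposition \ref{prop.cp1}, because elements of $Z_p^0$ are finite sums $\sum_i x_i\otimes g_i$ rather than single tensors; one needs the matrix estimate $[\langle Sx_i,Sx_j\rangle]\le\|S\|^2[\langle x_i,x_j\rangle]$ in $M_n(\cA)$ followed by conjugation with the column $(u_{g_1},\dots,u_{g_n})$---the same Paschke-type positivity trick the paper uses to show $Z_p^0$ is an inner product module---so this is routine but should be spelled out. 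Your surjectivity argument (dense range of a $*$-homomorphism is closed, hence full) and your closing diagnosis of why the map is only an epimorphism (compacts $\Theta_{x\otimes g,\,y\otimes h}$ with $g\neq h$ lie outside $\kappa_p(\cK(X_p))$, so covariance does not transfer back) are both sound and consistent with the paper's remark that $\cO_{X\widetilde\bowtie G}\cong\cO_X\bowtie G$ remains open.
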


\begin{proof}
By Corollary \ref{C:HaoNgT}, there is a natural Cuntz-Pimsner covariant representation of $X\bowtie G$ on $\O_{X\widetilde\bowtie G}$:
$X\bowtie G\hookrightarrow \cT_{X\bowtie G}\cong \cT_{X\widetilde\bowtie G} \twoheadrightarrow \O_{X\widetilde\bowtie G}$. By the universal property of $\O_{X\bowtie G}$, this gives a homomorphism from $\cO_{X\bowtie G}$ onto $\cO_{X\widetilde\bowtie G}$. 
\end{proof}

\subsection{Nica-Toeplitz representations of $X\bowtie G$ and the C*-algebra $\cN\cT_X\bowtie G$}

Suppose that $P$ is a right LCM semigroup and that $G$ is a group. Then the \ZS product semigroup $P\bowtie G$ is known to be right LCM as well \cite{BRRW}. In particular, for any $(p,g), (q,h)\in P\bowtie G$, we have that 
\[(p,g)P\bowtie G\cap (q,h)P\bowtie G = \begin{cases}
(r,k)P\bowtie G & \text{if } pP\cap qP=rP, k\in G,\\
\emptyset & \text{otherwise}.
\end{cases}
\]
Here, the choice of $k$ can be arbitrary since $(e,k)$ is invertible in $P\bowtie G$ and thus $(r,k)P\bowtie G=(r,e)P\bowtie G$ for all $k\in G$.
 We first prove that, for a given \ZS system $(X,G,\beta)$ where $X$ is compactly aligned, the product system $X\bowtie G$ is compactly aligned as well.

\begin{proposition} 
Let $P$ be a right LCM semigroup and $X$ a compactly aligned product system over $P$. Then, for a given \ZS system $(X,G,\beta)$, $X\bowtie G$ is also compactly aligned. 
\end{proposition}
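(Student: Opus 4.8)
The plan is to reduce the compact alignment of $Y:=X\bowtie G$ entirely to that of $X$, using the isometric $*$-isomorphisms $\iota_{p,g}\colon\cL(X_p)\to\cL(Y_{(p,g)})$ supplied by Proposition \ref{prop.cp1}. The starting point is the description of the LCM structure in $P\bowtie G$ recalled just above the statement: the principal right ideals $(p,g)(P\bowtie G)$ and $(q,h)(P\bowtie G)$ meet in $(r,k)(P\bowtie G)$ precisely when $pP\cap qP=rP$ in $P$, with $k$ arbitrary. So I would fix $S\in\cK(Y_{(p,g)})$ and $T\in\cK(Y_{(q,h)})$ in the nontrivial case $pP\cap qP=rP$, and, by Proposition \ref{prop.cp1}, write $S=\iota_{p,g}(S_0)$ and $T=\iota_{q,h}(T_0)$ with $S_0\in\cK(X_p)$ and $T_0\in\cK(X_q)$.

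The heart of the argument is an intertwining identity relating the inclusion maps of $Y$ to those of $X$. Fixing a factorization $(r,k)=(p,g)(s,l)$, which exists because $g$ acts bijectively on $P$ and $r\in pP$, I claim that
\[
i_{(p,g)}^{(r,k)}\circ\iota_{p,g}=\iota_{r,k}\circ i_p^r,
\]
and symmetrically $i_{(q,h)}^{(r,k)}\circ\iota_{q,h}=\iota_{r,k}\circ i_q^r$. To verify the first one, I would evaluate both sides on the vectors $x\beta_g(y)\otimes k=(x\otimes g)(y\otimes l)\in Y_{(r,k)}$ with $x\in X_p$ and $y\in X_s$: the left-hand side sends it to $((S_0x)\otimes g)(y\otimes l)=(S_0x)\beta_g(y)\otimes k$ by the definitions of the inclusion map and of $\iota_{p,g}$, while the right-hand side gives $\big(i_p^r(S_0)(x\beta_g(y))\big)\otimes k=(S_0x)\beta_g(y)\otimes k$, using $\beta_g(y)\in X_{g\cdot s}$ together with $r=p(g\cdot s)$. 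Since such vectors span a dense subspace of $Y_{(r,k)}$ (as $M_{p,g\cdot s}$ is unitary and $\beta_g$ maps $X_s$ onto $X_{g\cdot s}$) and both operators are adjointable, the identity follows.

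Granting this, the conclusion is immediate. Since $\iota_{r,k}$ is a $*$-homomorphism,
\[
i_{(p,g)}^{(r,k)}(S)\,i_{(q,h)}^{(r,k)}(T)=\iota_{r,k}\big(i_p^r(S_0)\big)\,\iota_{r,k}\big(i_q^r(T_0)\big)=\iota_{r,k}\big(i_p^r(S_0)\,i_q^r(T_0)\big).
\]
Because $X$ is compactly aligned and $pP\cap qP=rP$, the element $i_p^r(S_0)\,i_q^r(T_0)=S_0\vee T_0$ lies in $\cK(X_r)$, so the product above lies in $\iota_{r,k}(\cK(X_r))=\cK(Y_{(r,k)})$, again by Proposition \ref{prop.cp1}; this is exactly compact alignment of $Y$.

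The delicate point, and the step I would carry out most carefully, is the intertwining identity: it requires matching the two factorizations $r=p(g\cdot s)$ and $r=q(h\cdot s')$ against the semigroup multiplication in $P\bowtie G$, and confirming that the extra group coordinate $k$ is merely carried along by $\iota_{r,k}$ without affecting the computation. Everything else is bookkeeping, since Proposition \ref{prop.cp1} already isolates the passage between $\cK(X_p)$ and $\cK(Y_{(p,g)})$, and the reduction of the LCM structure of $P\bowtie G$ to that of $P$ is provided by \cite{BRRW}.
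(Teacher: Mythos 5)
Your proof is correct and takes essentially the same approach as the paper's: reduction to $X$ via the isomorphisms $\iota_{p,g}$ of Proposition \ref{prop.cp1}, the intertwining identity $i_{(p,g)}^{(r,k)}\circ\iota_{p,g}=\iota_{r,k}\circ i_p^r$ (which is exactly the paper's commuting diagram, verified by the same computation on decomposable elements of $Y_{(r,k)}$), and the concluding factorization $i_{(p,g)}^{(r,k)}(S)\,i_{(q,h)}^{(r,k)}(T)=\iota_{r,k}(S_0\vee T_0)$. The only cosmetic difference is that you carry a general group coordinate $k$ where the paper normalizes to $k=e$, using that $(r,k)(P\bowtie G)=(r,e)(P\bowtie G)$.
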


\begin{proof} As before, let $Y:=X\bowtie G$. For each $p\in P$ and $g\in G$, define $\iota_{p,g}:\cL(X_p)\to\cL(Y_{(p,g)})$ by $\iota_{p,g}(T)(x\otimes g)=(Tx)\otimes g$. By Proposition \ref{prop.cp1}, $\iota_{p,g}$ is an isometric $*$-isomorphism, and $\iota_{p,g}(\cK(X_p))=\cK(Y_{(p,g)})$. Fix $p,q\in P$ with $pP\cap qP=rP$. For any $g,h\in G$, we have that $(p,g)P\bowtie G\cap (q,h)P\bowtie G=(r,e)P\bowtie G$. 
Let $p^{-1}r$ be the unique element in $P$ such that $p(p^{-1}r)=p$. 

For any $S\in\cK(X_p)$ and any $x\in X_p$ and $y\in X_{p^{-1}r}$, one has 
\begin{align*}
    \iota_{r,e}\circ i_p^r(S)(xy\otimes e) 
    &= (i_p^r(S)(xy))\otimes e\\
    &= (Sx)y \otimes e \\
    &= ((Sx)\otimes g)(\beta_{g^{-1}}(y)\otimes g_0)\  (\text{with }g_0:=(g|_{g^{-1}\cdot (p^{-1}r)})^{-1})\\
    &= \left(\iota_{p,g}(S)(x\otimes g)\right)(\beta_{g^{-1}}(y)\otimes g_0)\\
    &= i_{(p,g)}^{(r,e)}\circ \iota_{p,g}(S)(xy\otimes e).
\end{align*}
%Here, $g_0:=(g|_{g^{-1}\cdot (p^{-1}r)})^{-1}$. 
Similarly, we have that $\iota_{r,e}\circ i_q^r=i_{(q,h)}^{(r,e)}\circ \iota_{q,h}$. Therefore, the following diagram commutes:

\begin{figure}[h]
    \centering

    \begin{tikzpicture}[scale=0.9]
    
    \node at (-3,2) {$\cK(X_p)$};
    \node at (3,2) {$\cK(Y_{(p,g)})$};
    \node at (-3,0) {$\cK(X_r)$};
    \node at (3,0) {$\cK(Y_{(r,e)})$};
    \node at (-3,-2) {$\cK(X_q)$};
    \node at (3,-2) {$\cK(Y_{(q,h)})$};
    
    \draw[->] (-2,2) -- (2,2);
    \draw[->] (-2,0) -- (2,0);
    \draw[->] (-2,-2) -- (2,-2);
    \draw[->] (-3,1.5) -- (-3,0.5);
    \draw[->] (-3,-1.5) -- (-3,-0.5);
    \draw[->] (3,1.5) -- (3,0.5);
    \draw[->] (3,-1.5) -- (3,-0.5);

	\node at (-3.3, 1) {$i_p^r$};
	\node at (-3.3, -1) {$i_q^r$};
	
	\node at (0, 2.3) {$\iota_{p,g}$};
    \node at (0, 0.3) {$\iota_{r,e}$};
    \node at (0, -1.7) {$\iota_{q,h}$};
	
	\node at (3.6, 1) {$i_{(p,g)}^{(r,e)}$};
	\node at (3.6, -1) {$i_{(q,h)}^{(r,e)}$};
    \end{tikzpicture}
\end{figure}

Now for any $S\in\cK(Y_{(p,g)})$ and $T\in\cK(Y_{(q,h)})$, by Propostion \ref{prop.cp1}, there exist $S_0\in\cK(X_p)$ and $T_0\in\cK(X_q)$ such that $\iota_{p,g}(S_0)=S$ and $\iota_{q,h}(T_0)=T$. Therefore,
\begin{align*}
    i_{(p,g)}^{(r,e)}(S)i_{(q,h)}^{(r,e)}(T) &= i_{(p,g)}^{(r,e)}(\iota_{p,g}(S_0))i_{(q,h)}^{(r,e)}(\iota_{q,h}(T_0)) \\
    &= \iota_{r,e}(i_p^r(S_0))\iota_{r,e}(i_q^r(T_0)) \\
    &= \iota_{r,e}(i_p^r(S_0)i_q^r(T_0)).
\end{align*}
Since $X$ is compactly aligned, $i_p^r(S_0)i_q^r(T_0)\in \cK(X_r)$. Since $\iota_{r,e}$ maps $\cK(X_r)$ to $\cK(Y_{(r,e)})$, we have that
\[
i_{(p,g)}^{(r,e)}(S)i_{(q,h)}^{(r,e)}(T)=\iota_{r,e}(i_p^r(S_0)i_q^r(T_0))\in\cK(Y_{(r,e)}). 
\qedhere\]
\end{proof}

\begin{theorem} 
\label{T:Ncov}
Suppose that $X$ is a compactly aligned product system over a right LCM semigroup $P$. 
Let $(X,G,\beta)$ be a \ZS system, $\Psi$ a representation of $X\bowtie G$, and $(\psi, U)$ the covariant representation of $(X,G,\beta)$ under the one-to-one 
correspondence $\Pi$ given in Theorem \ref{T:Upsi}.
Then $\Psi$ is Nica-covariant if and only if so is $\psi$. 
\end{theorem}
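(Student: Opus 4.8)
The plan is to reduce the Nica-covariance condition for $\Psi$ on $X\bowtie G$ to that for $\psi$ on $X$ by transporting everything through the isometric $*$-isomorphisms $\iota_{p,g}\colon\cK(X_p)\to\cK(Y_{(p,g)})$ of Proposition \ref{prop.cp1}. The first step is to record the single key identity
\[
\Psi^{(p,g)}\circ\iota_{p,g}=\psi^{(p)}\qquad\text{on }\cK(X_p).
\]
Since both sides are norm-continuous $*$-homomorphisms, it suffices to check this on the rank-one generators $\theta_{x,y}$. Here $\iota_{p,g}(\theta_{x,y})=\Theta_{x\otimes g,\,y\otimes g}$ by Proposition \ref{prop.cp1}, and using $\Psi_{(p,g)}(x\otimes g)=\psi_p(x)U_g$ together with the unitarity of $U_g$ (exactly as in the proof of Theorem \ref{thm.cp}) one computes $\Psi^{(p,g)}(\Theta_{x\otimes g,y\otimes g})=\psi_p(x)U_gU_g^*\psi_p(y)^*=\psi^{(p)}(\theta_{x,y})$. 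This identity already shows that any product $\Psi^{(p,g)}(\tilde S)\Psi^{(q,h)}(\tilde T)$ equals $\psi^{(p)}(S_0)\psi^{(q)}(T_0)$, where $\tilde S=\iota_{p,g}(S_0)$ and $\tilde T=\iota_{q,h}(T_0)$; every element of $\cK(Y_{(p,g)})$ is of this form because $\iota_{p,g}$ is surjective onto $\cK(Y_{(p,g)})$.

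Second, I would translate the wedge operation. Assume $pP\cap qP=rP$, so that $(p,g)(P\bowtie G)\cap(q,h)(P\bowtie G)=(r,e)(P\bowtie G)$ by \cite{BRRW}. Writing $\tilde S=\iota_{p,g}(S_0)$ and $\tilde T=\iota_{q,h}(T_0)$ and invoking the commuting square $\iota_{r,e}\circ i_p^r=i_{(p,g)}^{(r,e)}\circ\iota_{p,g}$ established in the proof of the preceding proposition (and its $q$-analogue), together with the fact that $\iota_{r,e}$ is a $*$-homomorphism, I obtain
\[
\tilde S\vee\tilde T
= i_{(p,g)}^{(r,e)}(\tilde S)\,i_{(q,h)}^{(r,e)}(\tilde T)
= \iota_{r,e}\bigl(i_p^r(S_0)\bigr)\,\iota_{r,e}\bigl(i_q^r(T_0)\bigr)
= \iota_{r,e}(S_0\vee T_0).
\]
Applying $\Psi^{(r,e)}$ and using the key identity from the first step then yields $\Psi^{(r,e)}(\tilde S\vee\tilde T)=\psi^{(r)}(S_0\vee T_0)$.

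With these two dictionaries in hand the equivalence is immediate in both directions. Nica-covariance of $\Psi$ asserts, for all $(p,g),(q,h)$ and all $\tilde S,\tilde T$, that $\Psi^{(p,g)}(\tilde S)\Psi^{(q,h)}(\tilde T)$ equals $\Psi^{(r,e)}(\tilde S\vee\tilde T)$ when $pP\cap qP=rP$ and equals $0$ when $pP\cap qP=\emptyset$; by the two computations above these are precisely the equations $\psi^{(p)}(S_0)\psi^{(q)}(T_0)=\psi^{(r)}(S_0\vee T_0)$ and $\psi^{(p)}(S_0)\psi^{(q)}(T_0)=0$, which is Nica-covariance of $\psi$. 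For the forward direction one specializes $g=h=e$ and lets $S_0,T_0$ range over $\cK(X_p),\cK(X_q)$; for the converse one uses surjectivity of $\iota_{p,g},\iota_{q,h}$ to see that arbitrary $\tilde S,\tilde T$ arise from some $S_0,T_0$. The main point to watch is purely structural bookkeeping: that the right-LCM representative in $P\bowtie G$ can always be taken of the form $(r,e)$ (so that $\iota_{r,e}$ is the correct isomorphism to conjugate by) and that emptiness of the intersection is governed solely by $pP\cap qP$; both facts are supplied by \cite{BRRW}. No genuinely new analytic difficulty arises once the identity $\Psi^{(p,g)}\circ\iota_{p,g}=\psi^{(p)}$ and the commuting square are in place.
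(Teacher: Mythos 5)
Your proposal is correct and follows essentially the same route as the paper: the key identity $\Psi^{(p,g)}\circ\iota_{p,g}=\psi^{(p)}$ checked on rank-one operators, the commuting square $\iota_{r,e}\circ i_p^r=i_{(p,g)}^{(r,e)}\circ\iota_{p,g}$ from the compact-alignment proposition, surjectivity of the maps $\iota_{p,g}$, and the right LCM structure of $P\bowtie G$ from \cite{BRRW}. The only cosmetic differences are that the paper works with an arbitrary representative $(r,k)$ rather than fixing $(r,e)$, and it dispenses with the direction ``$\Psi$ Nica-covariant $\Rightarrow$ $\psi$ Nica-covariant'' by simply restricting to $\bigsqcup_{p\in P} Y_{(p,e)}$, which is your specialization $g=h=e$ in different words.
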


\begin{proof}
For any $p\in P$ and $g\in G$, we first show that $\Psi^{(p,g)}\circ \iota_{p,g}=\psi^{(p)}$. For any $\theta_{x,y}\in \cK(X_p)$, $\psi^{(p)}(\theta_{x,y})=\psi_p(x) \psi_p(y)^*$. By Proposition \ref{prop.cp1}, $\iota_{p,g}(\theta_{x,y})=\Theta_{x\otimes g, y\otimes g}$, and thus
\begin{align*}
\Psi^{(p,g)}(\iota_{p,g}(\theta_{x,y})) &= \Psi_{(p,g)}(x\otimes g) \Psi_{(p,g)}(y\otimes g)^* \\
&=\psi_p(x) U_g U_g^* \psi_p(y)^* \ (\text{by }\eqref{E:DefPsi})\\
&= \psi_p(x)\psi_p(y)^* = \psi^{(p)}(\theta_{x,y}).
\end{align*}
In other words, the following diagram commutes:
\begin{figure}[h]
    \centering
    \begin{tikzpicture}[scale=0.9]
    
    \node at (-3,2) {$\cK(X_p)$};
    \node at (2.5,2) {$\cB$};
    \node at (-3,0) {$\cK(Y_{p,g})$};
    
    \draw[->] (-3,1.5) -- (-3,0.5);
    \draw[->] (-2,2) -- (2.3,2);
    \draw[->] (-2,0) -- (2.3,1.8);

	\node at (0, 2.3) {$\psi^{(p)}$};
    \node at (0, 0.3) {$\Psi^{(p,g)}$};
    \node at (-3.5, 1) {$\iota_{p,g}$};
    \end{tikzpicture}
\end{figure}

Suppose that $\psi$ is Nica-covariant. For any $S\in \cK(Y_{(p,g)})$ and $T\in \cK(Y_{(q,h)})$, there exist $S_0\in\cK(X_p)$ and $T_0\in\cK(X_q)$ such that $\iota_{p,g}(S_0)=S$ and $\iota_{q,h}(T_0)=T$. 
Then we have
\[ \Psi^{(p,g)}(S)\Psi^{(q,h)}(T) = \psi^{(p)}(S_0) \psi^{(q)}(T_0).\]
If $(p,g)P\bowtie G\cap (q,h)P\bowtie G=\emptyset$, then $pP\cap qP=\emptyset$ and thus $\psi^{(p)}(S_0) \psi^{(q)}(T_0)=0$ by the Nica-covariance of $\psi$. If $(p,g)P\bowtie G\cap (q,h)P\bowtie G=(r,k)P\bowtie G$, then $rP=pP\cap qP$ and thus 
\begin{align*}
    \Psi^{(p,g)}(S)\Psi^{(q,h)}(T) &= \psi^{(p)}(S_0) \psi^{(q)}(T_0) \\
    &= \psi^{(r)}(i_p^r(S_0) i_q^r(T_0)) \\
    &= \Psi^{(r,k)}(\iota_{r,k}(i_p^r(S_0) i_q^r(T_0))) \\
    &= \Psi^{(r,k)}(i_{(p,g)}^{(r,k)}(\iota_{p,g}(S_0)) i_{(q,h)}^{(r,k)}(\iota_{q,h}(T_0))) \\
    &= \Psi^{(r,k)}(i_{(p,g)}^{(r,k)}(S) i_{(q,h)}^{(r,k)}(T)). 
\end{align*}
Therefore, $\Psi$ is also Nica-covariant. The converse is clear since $\psi$ is the restriction of a Nica-covariant representation $\Psi$ on $X\cong \bigsqcup_{p\in P} Y_{(p,e)}$.
\end{proof}

\begin{definition} 
\label{D:NTXG}
Let $\cN\cT_X\bowtie G$ be the universal C*-algebra generated by the set of covariant representations $(\psi,U)$ of a \ZS system $(X,G,\beta)$ with $\psi$ Nica-covariant. 
\end{definition}

It follows from \cite[Proposition 6.8]{BLS2018b} and Example \ref{Eg:Fock} that $\cN\cT_X\bowtie G$ is always nontrivial. 

Completely similar to Corollaries \ref{C:HaoNgT} and \ref{C:HaoNgC}, we have the following Nica-Toeplitz type Hao-Ng isomorphism theorem. 

\begin{corollary} 
\label{C:HaoNgNT}
$\cN\cT_{X\bowtie G} = \cN\cT_X\bowtie G$.
\end{corollary}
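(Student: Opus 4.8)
The plan is to mirror the argument already used for Corollaries \ref{C:HaoNgT} and \ref{C:HaoNgC}, replacing the input theorem by Theorem \ref{T:Ncov}. The key observation is that the bijection $\Pi$ of Theorem \ref{T:Upsi} carries the \emph{defining class} of representations for one universal C*-algebra onto the defining class for the other. Concretely, by Theorem \ref{T:Ncov}, under $\Pi$ a representation $\Psi$ of $X\bowtie G$ is Nica-covariant if and only if the corresponding pair $(\psi,U)$ has $\psi$ Nica-covariant. Thus $\Pi$ restricts to a bijection between the Nica-covariant representations of $X\bowtie G$ (the objects governing $\cN\cT_{X\bowtie G}$) and the covariant pairs $(\psi,U)$ with $\psi$ Nica-covariant (the objects governing $\cN\cT_X\bowtie G$ by Definition \ref{D:NTXG}).

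With this in hand I would construct the two homomorphisms by applying the respective universal properties to generating families. First, let $i_{X\bowtie G}$ be the universal Nica-covariant representation generating $\cN\cT_{X\bowtie G}$; applying $\Pi$ to it produces, via the explicit formulas $\psi_p(x)=(i_{X\bowtie G})_{(p,e)}(x\otimes e)$ and $U_g=(i_{X\bowtie G})_{(e,g)}(1_{\cA}\otimes g)$, a covariant pair inside $\cN\cT_{X\bowtie G}$ with $\psi$ Nica-covariant. By the universal property of $\cN\cT_X\bowtie G$ this yields a $*$-homomorphism $\cN\cT_X\bowtie G\to\cN\cT_{X\bowtie G}$. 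Symmetrically, let $(\psi,U)$ be the universal covariant pair with $\psi$ Nica-covariant that generates $\cN\cT_X\bowtie G$; setting $\Psi_{(p,g)}(x\otimes g):=\psi_p(x)U_g$ as in \eqref{E:DefPsi} produces, by $\Pi^{-1}$ together with Theorem \ref{T:Ncov}, a Nica-covariant representation of $X\bowtie G$ sitting inside $\cN\cT_X\bowtie G$, and hence a $*$-homomorphism $\cN\cT_{X\bowtie G}\to\cN\cT_X\bowtie G$.

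Finally I would verify that these two homomorphisms are mutually inverse by checking their composites on the generators $x\otimes g$ and on $1_{\cA}\otimes g$. This is immediate because $\Pi$ and $\Pi^{-1}$ are genuine inverses at the level of generators, as recorded in Theorem \ref{T:Upsi}: the formulas $\psi_p(x)=\Psi_{(p,e)}(x\otimes e)$, $U_g=\Psi_{(e,g)}(1_{\cA}\otimes g)$ and $\Psi_{(p,g)}(x\otimes g)=\psi_p(x)U_g$ undo one another. Since each generating family is taken to the corresponding generating family, both composites fix the generators and therefore equal the identity, giving the asserted equality $\cN\cT_{X\bowtie G}=\cN\cT_X\bowtie G$.

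The proof carries no genuine analytic difficulty; the only point requiring care is the structural one, namely confirming that each generating family actually lands inside the other algebra \emph{as a representation of the correct type}. This is precisely the content secured by Theorem \ref{T:Ncov} (that Nica-covariance is preserved in both directions under $\Pi$), so the main obstacle has already been dispatched there, and the remaining argument is the formal universal-property bookkeeping described above.
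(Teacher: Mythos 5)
Your proposal is correct and follows essentially the same route as the paper: the paper deduces Corollary \ref{C:HaoNgNT} exactly by combining the bijection $\Pi$ of Theorem \ref{T:Upsi} with Theorem \ref{T:Ncov} (that $\Pi$ preserves Nica-covariance in both directions), so that the two universal C*-algebras are generated by corresponding families of representations and hence coincide. Your version merely spells out the universal-property bookkeeping that the paper leaves implicit in its phrase ``completely similar to Corollaries \ref{C:HaoNgT} and \ref{C:HaoNgC}.''
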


%%%%%%%%%%%%%%%%%%%%%%%
%%%%%%%%%%%%%%%%%%%%%%%
\section{Examples} 

\label{S:EX}

In this section, we provide some examples of \ZS actions of groups $G$ on product systems $X=\bigsqcup_{p\in P} X_p$ and the associated C*-algebras.

\begin{eg}
Let $(X,G,\beta)$ be a \ZS system. 
\begin{itemize}
\item[(i)] If $P$ is trivial, then $\T_X \bowtie G\cong \O_X \bowtie G\cong\A\rtimes_\beta G$. 

\item[(ii)] If $G$ is trivial,  then $\T_X \bowtie G\cong\T_X$ and $\O_X \bowtie G\cong\O_X$. Furthermore, if $X$ is a compact aligned product system over a right LCM semigroup, then 
 $\N\T_X \bowtie G\cong\N\T_X$.
 \end{itemize}
\end{eg}

\begin{eg}
 Suppose that $(X,G,\beta)$ is a homogenous \ZS system.  If furthermore $g|_p=g$ for all $g\in G$ and $p\in P$, then 
 $X\bowtie G$ becomes the crossed product $X\rtimes G$, and  $\T_X\bowtie G$ (resp.~$\O_X\bowtie G$) is the crossed product $\T_X\rtimes G$ (resp.~$\O_X\rtimes G$). 
 So it follows from Corollary \ref{C:HaoNgT} (resp.~Corollary \ref{C:HaoNgC})  that we have the Hao-Ng isomorphisms: 
 $\T_X\rtimes G\cong T_{X\rtimes G}$ (resp. $\O_X\rtimes G\cong \O_{X\rtimes G}$).
 
Furthermore, if $P$ is right LCM and $X$ is compactly aligned,  then $\N\T_X\bowtie G$ is the crossed product $\N\T_X\rtimes G$, and so by
 the corresponding Hao-Ng isomorphism becomes  
 $
 \N\T_{X\rtimes G}\cong  \N\T_X\rtimes G
 $ 
 (cf. Corollary \ref{C:HaoNgNT}).
 \end{eg}

\begin{eg}
\label{Eg:gss}

Consider the trivial product system $X_P :=\bigsqcup_{p\in P} \bC v_p$ over $P$. 
%Recall that $(\lambda v_p)(\mu v_q)=\lambda\mu v_{pq}$ for all $\lambda, \mu\in \bC$ and $p,q\in P$. 
For $g\in G$, let 
\begin{align}
\label{E:tri}
\beta_g(\lambda v_p)=\lambda v_{g\cdot p} \qforal \lambda\in\bC\text{ and } p\in P.
\end{align}
It is easy to check that $\beta$ is a \ZS action of $G$ on $X_P$. Then we obtain the \ZS product system $X_P\bowtie G$ by Theorem \ref{T:ZSP}.

Suppose that $P$ is right LCM. By \cite[Theorem 4.3]{BRRW} %(resp. \cite[Theorem 5.2]{BRRW}),
one has 
\[
 \N\T_X \bowtie G\cong \ca(P\bowtie G). %\ (\text{resp. }\N\O_X \bowtie G\cong \Q(P\bowtie G)).
 \]
%%%%%%%%%
%%%%%%%%%
 Thus, in this case, by the Hao-Ng isomorphism theorems in Section \ref{S:main} we have 
 \begin{align*}
\ca(P)\bowtie G\cong \N\T_X \bowtie G\cong \N\T_{X \bowtie G}\cong \ca(P\bowtie G).
 \end{align*}
Let us remark that the above covers the examples studied in \cite{BRRW}. 
 \end{eg}

%%%%%%%%%%%%%%%about k-graph self-similar actions
\begin{eg}
\label{Eg:resss}
Let $G$ be a group and $\Lambda$ be a row-finite $k$-graph. Let $(G,\Lambda)$ be a self-similar $k$-graph (\cite{LY-IMRN, LY-JFA})
 such that for each $g\in G$ %, $g|_v=g$ ($v\in \Lambda^0$), and  % and $p\in \bN^k$
\begin{align}
\label{E:res}
d(\mu)=d(\nu)\implies g|_\mu=g|_\nu.
\end{align}
Then one can construct a \ZS product 
$\bN^k\bowtie G$ as follows. For $p\in \bN^k$, take $\mu \in \Lambda^p$ and define 
\[
g\cdot p := p,\ g|_p:=g|_\mu. 
\]

Let $X(\Lambda)=\bigsqcup_{p\in \bN^k} X(\Lambda^p)$ be the $C(\Lambda^0)$-product systems over $\bN^k$ associated to $\Lambda$ (see \cite{RS05} for all related details). Define 
\[
\beta_g: X(\Lambda^p) \to X(\Lambda^p), \quad \beta_g(\chi_\mu)=\chi_{g\cdot\mu}
\]
for all $g\in G$ and $\mu \in \Lambda^p$. 
Then $(X(\Lambda), G, \beta)$ is a \ZS system. Indeed, it suffices to check that it satisfies (A5) and (A6). For (A5), let $\mu\in \Lambda^p$ and $\nu\in\Lambda^q$. Then
\begin{align*}
\beta_g(\chi_\mu \chi_\nu)
&=
\delta_{s(\mu),r(\nu)}\, \chi_{g\cdot(\mu\nu)} 
=\delta_{s(\mu),r(\nu)}\chi_{g\cdot\mu (g|_\mu\cdot \nu)} 
=\delta_{s(g\cdot\mu),r(g_\mu\cdot \nu)}\, \chi_{g\cdot\mu} \chi_{(g|_\mu\cdot \nu)} \\
&=\beta_g(\chi_\mu)\beta_{g|_\mu}(\nu)
=\beta_g(\chi_\mu)\beta_{g|_p}(\nu).
\end{align*}
For (A6), let $\mu,\nu\in \Lambda^p$ and $v\in \Lambda^0$. We have 
\begin{align*}
\langle \chi_{g\cdot \mu}, \chi_{g\cdot \nu}\rangle(v) 
&= \sum_{v=r(\lambda)} \chi_{g\cdot \mu}(\lambda) \chi_{g\cdot \nu}(\lambda)
= |\{g\cdot \mu = g\cdot \nu, r(g\cdot \mu)=v\}|\\
%&(=\delta_{g\cdot \mu, g\cdot \nu}\delta_{v, r(g\cdot \mu})\\
&=|\{\mu = \nu, g\cdot r(\mu)= v\}|
  =|\{\mu = \nu, g|_\mu\cdot r(\mu)= v\}|\\
&=|\{\mu = \nu, r(\mu)=(g|_\mu)^{-1}\cdot v\}|
  =\langle \chi_{\mu}, \chi_{\nu}\rangle((g|_\mu)^{-1}\cdot v) \\
&=\beta_{g|_\mu}(\langle \chi_\mu, \chi_\nu\rangle)(v)
   =\beta_{g|_p}(\langle \chi_\mu, \chi_\nu\rangle(v).
\end{align*}

Therefore, by Theorem \ref{T:ZSP}, we obtain a \ZS product $X(\Lambda)\bowtie G$ over $\bN^k\bowtie G$. Also one can see that 
$\O_{X(\Lambda)\bowtie G}\cong \O_{X(\Lambda)}\bowtie G \cong \O_{G, \Lambda}$, 
where $\O_{G,\Lambda}$ is the self-similar $k$-graph C*-algebra associated with $(G,\Lambda)$ (\cite{LY-IMRN}). 
\end{eg}

\begin{rem}
(i) One can easily see that the condition \eqref{E:res} is equivalent to $g|_\mu=g|_\nu$ for all \textit{edges} $\mu,\nu\in\Lambda$.

(ii) At first sight, the condition \eqref{E:res} seems restrictive. But it is not hard to find self-similar $k$-graphs satisfying \eqref{E:res}. For example, let 
$g|_\mu=g|_\nu=:g$. Also, one can invoke \cite[Lemma 3.6]{LY-IMRN} to obtain more examples. 
\end{rem}

%Notice that Example \ref{Eg:1vertex} can not directly follow from Example \ref{Eg:resss} as the restriction \eqref{E:res} is not required in Example \ref{Eg:1vertex}. 

%%%%%%%%%%%revised 
We finish the paper by the following example, which does not really belong to \ZS actions considered in this paper. But it presents another natural way to construct a product system over $\bN^k$ which is isomorphic to the product system $X_{G,\Lambda}$ defined in \cite[Section~4]{LY-IMRN}. This provides one of our motivations of considering homogeneous actions in Section~\ref{S:homog}, and so we decide to include 
it here. 

\begin{eg}
Let $(G,\Lambda)$ be a self-similar $k$-graph. Let $X(\Lambda)=\bigsqcup_{p\in \bN^k} X(\Lambda^p)$ be the product systems over $\bN^k$ associated to $\Lambda$ as above. Define 
\[
\beta_g: X(\Lambda^p) \to X(\Lambda^p), \quad \beta_g(\chi_\mu)=\chi_{g\cdot\mu}. 
\]
Then one can easily check that
\begin{enumerate}
\item[(A1)$'$] for each $p\in P$, $\beta_g: X(\Lambda^p) \to X(\Lambda^p)$ is a $\mathbb{C}$-linear bijection;  
\item[(A2)$'$]  for any $g,h\in G$, $\beta_g\circ\beta_h=\beta_{gh}$;
\item[(A3)$'$]  the map $\beta_{e}$ is the identity map;
\item[(A4)$'$] the map $\beta_g$ is a $*$-automorphism on $X(\Lambda^0)=C(\Lambda^0)$; 
\item[(A5)$'$] for $p,q\in P$, $\mu\in \Lambda_p$ and $\nu\in \Lambda_q$, 
\[
\beta_g(\chi_\mu \chi_\nu)=\beta_g(\chi_\mu)\beta_{g|_\mu}(\chi_\nu);
\] 
%%%
\item[(A6)$'$] for $p\in P$ and $\mu,\nu\in \Lambda_p$, 
\[e
\langle\beta_g(\chi_\mu), \beta_g(\chi_\nu)\rangle=\beta_{g|_\mu}(\langle \chi_\mu, \chi_\nu\rangle).
\] 
\end{enumerate}

It follows from (A1)$'$-(A4)$'$ that $(C(\Lambda^0), G)$ is a C*-dynamical system. Let 
$\fA:=C(\Lambda^0)\rtimes_\beta G$. 
In what follows, we sketch the construction of an $\fA$-product system $\cE$ over $\bN^k$, which is isomorphic to the product system $X_{G,\Lambda}$ defined in \cite[Section~4]{LY-IMRN}. The details are similar to those in Section~\ref{S:homog} above and left to the interested reader. 
For $p\in \bN^k$, we construct a C*-correspondence $\E_p$ over $\fA$. 
For $x= \chi_v u_h$, $\xi=\chi_\mu u_g$ and $\eta=\chi_\nu u_h$ with $\xi_\mu, \eta_\nu\in X_p$,
define 
\begin{align*}
\xi x := \delta_{s(\mu), g\cdot v} \chi_\mu u_{gh},\
x\xi :=  \delta_{v, h\cdot r(\mu)} \chi_{h\cdot \mu} u_{h|_\mu g},\
\langle \xi, \eta\rangle 
:=u_{g^{-1}}\langle \chi_\mu,  \chi_\nu\rangle u_h.
\end{align*}
Let $\cE_p$ be the closure of the linear span of $\chi_\mu u_g$ with $\mu \in \Lambda^p$ and $g\in G$.
Set 
\[
\cE:=\bigsqcup_{p\in \bN^k} \cE_p.
\] 
For $\xi=\sum \chi_\mu u_g\in \cE_p$ and $\eta=\sum \chi_\nu u_h\in \cE_q$ define
\begin{align*}
\big(\sum \chi_\mu u_g\big)\big(\sum \chi_\nu u_h\big):=\sum \chi_\mu \chi_{g\cdot \nu} u_{g|_\nu} u_h=\sum \delta_{s(\mu), r(g\cdot \nu)}\chi_{\mu g\cdot \nu} u_{g|_\nu h}.
\end{align*}
Then $\cE$ is an $\fA$-product system over $\bN^k$.
One can check that $\cE$ is isomorphic to the product system $X_{G,\Lambda}$ defined in \cite[Section~4]{LY-IMRN} via the map
\[
%Z\to X_{G,\Lambda}, \ 
\chi_\mu u_g\in \cE_p\mapsto \chi_\mu j(g)\in X_{G,\Lambda,p}.
\]
\end{eg}

%\bibliographystyle{abbrv}
%\bibliography{semigroup}

\begin{thebibliography}{10}

\bibitem{BLS2018b}
N.~Brownlowe, N.~S. Larsen, and N.~Stammeier.
\newblock {C*}-algebras of algebraic dynamical systems and right {LCM}
  semigroups.
\newblock {\em Indiana Univ. Math. J.}, 67(6):2453--2486, 2018.

\bibitem{BRRW}
N.~Brownlowe, J.~Ramagge, D.~Robertson, and M.~F. Whittaker.
\newblock Zappa-{S}z\'ep products of semigroups and their {$C^\ast$}-algebras.
\newblock {\em J. Funct. Anal.}, 266(6):3937--3967, 2014.

\bibitem{DOK20} A.~Dor-On and E.~Katsoulis. Tensor algebras of product systems and their C*-envelopes. \textit{J. Funct. Anal.} \textbf{278} (2020), no. 7, 108416.

\bibitem{DL2020}
A.~Duwenig and B.~Li.
\newblock The {Z}appa-{S}z\'ep product of {F}ell bundles.
\newblock {\em https://arxiv.org/abs/2006.03128}, 2020.

\bibitem{Fowler2002}
N.~J. Fowler.
\newblock Discrete product systems of {H}ilbert bimodules.
\newblock {\em Pacific J. Math.}, 204(2):335--375, 2002.

\bibitem{FR1998}
N.~J. Fowler and I.~Raeburn.
\newblock Discrete product systems and twisted crossed products by semigroups.
\newblock {\em J. Funct. Anal.}, 155(1):171--204, 1998.

\bibitem{HaoNg2008}
G.~Hao and C.-K. Ng.
\newblock Crossed products of {C*}-correspondences by amenable group
  actions.
\newblock {\em J. Math. Anal. Appl.}, 345(2):702--707, 2008.

\bibitem{K2017}
E.~G. Katsoulis.
\newblock {${\rm C}^*$} envelopes and the {H}ao-{N}g isomorphism for discrete
  groups.
\newblock {\em Int. Math. Res. Not. IMRN}, (18):5751--5768, 2017.

\bibitem{KR19} E.~G. Katsoulis and C.~Ramsey, Crossed products of operator algebras. \textit{Mem. Amer. Math. Soc.} \textbf{258} (2019), no. 1240, vii+85 pp.

\bibitem{KL2018}
B.~Kwa\'{s}niewski and N.~S. Larsen.
\newblock Nica-{T}oeplitz algebras associated with product systems over right {LCM} semigroups.
\newblock {\em J. Math. Anal. Appl.}, 479(1):532--570, 2019.

%
%\bibitem{LY-CJM}
%H.~Li and D.~Yang.
%\newblock Boundary quotient {$\rm C^*$}-algebras of products of odometers.
%\newblock {\em Canad. J. Math.}, 71(1):183--212, 2019.

\bibitem{LY-IMRN}
H.~Li and D.~Yang.
\newblock Self-similar {$k$}-graph {$\rm C^*$}-algebras.  {\em Int. Math. Res. Not.}, to appear. DOI: 10.1093/imrn/rnz146.
%\newblock {\em arxiv.org/abs/1712.08194}, 2019.

\bibitem{LY-JFA} H.~ Li and D.~Yang. KMS states of self-similar $k$-graph C*-algebras. \textit{J. Funct. Anal.} \textbf{276} (2019), 3795--3831.

\bibitem{MS1998}
P.~S. Muhly and B.~Solel.
\newblock Tensor algebras over {C*}-correspondences: representations,
  dilations, and {C*}-envelopes.
\newblock {\em J. Funct. Anal.}, 158(2):389--457, 1998.

\bibitem{Murphy1996b}
G.~J. Murphy.
\newblock {$C^\ast$}-algebras generated by commuting isometries.
\newblock {\em Rocky Mountain J. Math.}, 26(1):237--267, 1996.

\bibitem{Nica1992}
A.~Nica.
\newblock {$C\sp *$}-algebras generated by isometries and {W}iener-{H}opf
  operators.
\newblock {\em J. Operator Theory}, 27(1):17--52, 1992.

\bibitem{Paschke1973}
W.~L. Paschke.
\newblock Inner product modules over {$B\sp{\ast} $}-algebras.
\newblock {\em Trans. Amer. Math. Soc.}, 182:443--468, 1973.

\bibitem{Pimsner1997}
M.~V. Pimsner.
\newblock A class of {C*}-algebras generalizing both {C}untz-{K}rieger
  algebras and crossed products by {${\bf Z}$}.
\newblock In {\em Free probability theory} ({W}aterloo, {ON}, 1995), volume~12
  of {\em Fields Inst. Commun.}, pages 189--212. Amer. Math. Soc., Providence,
  RI, 1997.

\bibitem{RS05}
I.~Raeburn and A.~Sims.
\newblock Product systems of graphs and the {T}oeplitz algebras of higher-rank
  graphs.
\newblock {\em J. Operator Theory}, 53(2):399--429, 2005.





\end{thebibliography}

\end{document}